\newcommand{\Rmnum}[1]{\expandafter\@slowromancap\romannumeral #1@}
\begin{document}

\title{High-order numerical algorithms for Riesz derivatives via constructing
 new generating functions\thanks{The work was partially supported by the National
Natural Science Foundation of China under Grant Nos. 11372170 and 11561060, the Scientific Research Program
 for Young Teachers of Tianshui Normal University under Grant No. TSA1405, and Tianshui
Normal University Key Construction Subject Project (Big data processing in dynamic image).}
}


\author{Hengfei Ding        \and
        Changpin Li 
}


\institute{Hengfei Ding \at
             School of Mathematics and Statistics, Tianshui
Normal University, Tianshui 741001, China \\
              \email{dinghf05@163.com}           
           \and
           Changpin Li \at
              Department of Mathematics, Shanghai University,
Shanghai 200444, China\\
 \email{lcp@shu.edu.cn}
}

\date{Received: date / Accepted: date}

\maketitle

\begin{abstract}
A class of high-order numerical algorithms
for Riesz derivatives are established through constructing new generating functions.
Such new high-order formulas can be regarded as the modification of the classical (or shifted)
Lubich's difference ones, which greatly improve the convergence orders and stability for time-dependent problems with Riesz derivatives.
 In rapid sequence, we apply the 2nd-order formula to one-dimension Riesz spatial fractional partial differential equations
to establish an unconditionally stable finite difference scheme with convergent order $\mathcal{O}(\tau^2+h^2)$,
 where $\tau$ and $h$  are the temporal and spatial
stepsizes, respectively. Finally,
 some numerical experiments are performed to confirm
the theoretical results and testify the effectiveness of the derived numerical algorithms.
\keywords{
 Riesz derivative \and Riesz type partial differential equation \and
 Generating function}
\end{abstract}

\section{Introduction}
\label{intro}
 In recent years, increasing attentions have been attracted on fractional calculus due to its
widespread applications in science and engineering \cite{MR,OS}.
In the process of mathematical modeling in the fractional realms, Caputo derivatives and Riemann-Liouville derivatives are
mostly used. Generally speaking, the formers are often utilized to characterize history dependence, whilst the latter to describe
long-range interactions. In contrast with the classical diffusion operator $\Delta$, Riesz derivative operator, a special linear
 combination of the left Riemann-Liouville derivative operator and the right Riemann-Liouville derivative one, is applied to
 reflecting anomalous diffusion in space \cite{MK}.
The $\alpha$th-order $(1 < \alpha<2)$ Riesz derivative $\displaystyle \frac{\partial^\alpha
u(x)}{\partial|x|^\alpha}$ in $x\in(a,b)$ is defined, for example, in \cite{KST}
$$
\begin{array}{lll}\label{eq.1}
\displaystyle \frac{\partial^\alpha
u(x)}{\partial{|x|^\alpha}}=\displaystyle C_\alpha\left
(\,_{RL}D_{a,x}^\alpha+\,_{RL}D_{x,b}^\alpha\right)u(x),
\end{array}\eqno(1)
$$
where coefficient $C_\alpha=-\frac{1}{2\cos\left(\frac{\pi}{2}\alpha\right)},$
$\,_{RL}D_{a,x}^\alpha$ and $\,_{RL}D_{x,b}^\alpha$
are the left and right Riemann-Liouville derivatives of order $\alpha$ defined
by \cite{SKM}
$$\displaystyle \,_{RL}{D}_{a,x}^{\alpha}u(x)=\left\{
\begin{array}{lll}
\displaystyle\frac{1}{\Gamma(2-\alpha)}\frac{\textmd{d}^2}{\textmd{d}
x^2}
 \int_{a}^{x}\frac{u(s)\textmd{d}s}{(x-s)^{\alpha-1}},\;\;\;\; 1<\alpha<2,\vspace{0.4 cm}\\
\displaystyle\frac{\textmd{d}^{2} u(x)}{\textmd{d} x^{2}},\;\;\;\;\alpha=2.\vspace{0.2 cm}\\
\end{array}\right.
$$
and
$$\displaystyle \,_{RL}{D}_{x,b}^{\alpha}u(x)=\left\{
\begin{array}{lll}
\displaystyle\frac{1}{\Gamma(2-\alpha)}\frac{\textmd{d}^2}{\textmd{d}
x^2}
 \int_{x}^{b}\frac{u(s)\textmd{d}s}{(s-x)^{\alpha-1}},\;\;\;\; 1<\alpha<2,\vspace{0.4 cm}\\
\displaystyle\frac{\textmd{d}^{2} u(x)}{\textmd{d} x^{2}},\;\;\;\;\alpha=2.\vspace{0.2 cm}\\
\end{array}\right.
$$
The special case with $a=-\infty$ or $b=+\infty$ corresponds to the Liouville derivative.
For a well defined function on a bounded interval $(a, b)$,
we discuss them in $[a,+\infty)$ or $(-\infty, b]$ often by zero extension under suitable smooth conditions,
i.e.,
let $u(x) = 0$ for all $x > b$ or $x < a$. In this situation we have
$\displaystyle \,_{RL}{D}_{a,x}^{\alpha}u(x)=\,_{RL}{D}_{-\infty,x}^{\alpha}u(x)$ and
 $\displaystyle \,_{RL}{D}_{x,b}^{\alpha}u(x)=\,_{RL}{D}_{x,+\infty}^{\alpha}u(x)$.

It is known that the Fourier transform of a given function $u(x)\in L_1(\mathds{R})$ is given by, for example, in \cite{ER}
$$
\begin{array}{lll}
\displaystyle \hat{u}(s)=\mathcal{F}\{u(x);s\}=\int_{-\infty}^{+\infty}e^{-\mathrm{i}sx}u(x)\mathrm{d}x,\;x\in \mathds{R},
\end{array}
$$
it follows that
$$
\begin{array}{lll}
\displaystyle \mathcal{F}\left\{\frac{\textmd{d}^n u(x)}{\textmd{d}
x^n};s\right\}=(\mathrm{i}s)^{n}\hat{u}(s),\;n\in \mathds{N},\;s\in \mathds{R},\label{eq.2}
\end{array}\eqno(2)
$$
and
$$
\begin{array}{lll}
\displaystyle \mathcal{F}\left\{\frac{\partial^\alpha
u(x)}{\partial|x|^\alpha};s\right\}=C_{\alpha}\left((-\mathrm{i}s)^{\alpha}+(\mathrm{i}s)^{\alpha}\right)\hat{u}(s)
=
-|s|^{\alpha}\hat{u}(s),\;1<\alpha<2,\;s\in \mathds{R}.\label{eq.3}
\end{array}\eqno(3)
$$

Note that
$ -|s|^{\alpha}=-(s^2)^{\frac{\alpha}{2}}$ for $s\in \mathds{R}$. So sometimes the Riesz
 derivative is also rewritten as a power of the operator $-\frac{\textmd{d}^2}{\textmd{d}
x^2}$, i.e.,
$$
\begin{array}{lll}
\displaystyle \frac{\partial^\alpha
u(x)}{\partial|x|^\alpha}=-\left(-\frac{\textmd{d}^2  }{\textmd{d}
x^2}\right)^{\frac{\alpha}{2}},\;1<\alpha<2.
\end{array}
$$
Hence the Riesz derivative is often regarded as the symmetric fractional
 generalization of the second derivative \cite{SZ}.

 From (2) and (3), one easily sees that in the case $\alpha=1$,
$ \mathcal{F}\left\{\frac{\partial
u(x)}{\partial|x| };s\right\} \neq\mathcal{F}\left\{\frac{\textmd{d} u(x)}{\textmd{d}
x};s\right\}.$
Besides, Feller proposed another Riesz-type derivative (more general than Riesz derivative) with following form
\cite{F},
$$
\begin{array}{lll}
\displaystyle \frac{\partial^\alpha_{\theta}
u(x)}{\partial{|x|^\alpha}}=\displaystyle-\left( C_{-}(\alpha,\theta)
\,_{RL}D_{a,x}^\alpha+C_{+}(\alpha,\theta)\,_{RL}D_{x,b}^\alpha\right)u(x),\;0<\alpha<2,\;\alpha\neq1,
\end{array}
$$
with
$$
\begin{array}{lll}
\displaystyle  C_{-}(\alpha,\theta)
=\frac{\sin\left(\frac{\alpha-\theta}{2}\pi\right)}{\sin(\alpha\pi)},\;\;
C_{+}(\alpha,\theta)
=\frac{\sin\left(\frac{\alpha+\theta}{2}\pi\right)}{\sin(\alpha\pi)},\;\theta=\min\{\alpha,2-\alpha\}.
\end{array}
$$
Letting the skewness parameter $\theta = 0$, one gets
$$
\begin{array}{lll}
\displaystyle  C_{-}(\alpha,\theta=0)
=
C_{+}(\alpha,\theta=0)
=\frac{1}{2\cos\left(\frac{\pi}{2}\alpha\right)},\;\alpha\neq1,
\end{array}
$$
which is just the Riesz derivative (1).

For most fractional differential equations, to obtain the analytical solutions are not easy even impossible,
 so many researchers have to solve fractional differential
 equations by using various kinds of numerical methods \cite{A,CSW,CJLT,GMP,HTVY,WD,WV,XHC,yuste,george,ZLLT1,ZS}. In particular,
 as for Riesz spatial fractional differential equations,
 the key issue is how to approximate the
 Riesz derivatives. From (1), one can see that a specific linear combination of the left and right Riemann-Liouville
derivatives gives a
Riesz derivative. So this question eventually come down to numerically approximate the Riemann-Liouville derivatives.
Usually, we approximate the left Riemann-Liouville derivative by using the following
Gr\"{u}nwald-Letnikov formula
$$
\begin{array}{lll}
\displaystyle \,_{GL}D_{a,x}^{\alpha}u(x)
=\lim_{h\rightarrow 0}\frac{1}{h^{\alpha}}\sum\limits_{\ell=0}^{\infty}
\varpi_{1,\ell}^{(\alpha)}u(x-\ell h),
\end{array}
$$
due to the fact that Riemann-Liouville derivative and Gr\"{u}nwald-Letnikov one are equivalent
 under some smooth conditions \cite{P}. But in specific applications,
we cannot solve a numerical problem
with an infinite number of grid points, so one has to
use the following formula
$$
\begin{array}{lll}
\displaystyle \,_{RL}D_{a,x}^{\alpha}u(x)
=\frac{1}{h^{\alpha}}\sum\limits_{\ell=0}^{\left[\frac{x-a}{h}\right]}
\varpi_{1,\ell}^{(\alpha)}u(x-\ell h)+\mathcal {O}(h),
\end{array}\eqno(4)
$$
in which the Gr\"{u}nwald-Letnikov coefficients $\varpi_{1,\ell}^{(\alpha)}$ are given by
$$
\displaystyle
\begin{array}{lll}
\displaystyle\varpi_{1,\ell}^{(\alpha)}=(-1)^\ell \left(\alpha\atop \ell
\right)=\displaystyle(-1)^\ell\frac{\Gamma(\alpha+1)}{\Gamma(\ell+1)\Gamma(\alpha-\ell+1)},\;\;\;\ell=0,
1,\ldots
\end{array}
$$

In fact, the generating function of the above coefficients $\varpi_{1,\ell}^{(\alpha)}$ is
$\displaystyle W_1(z)=\left(1-z\right)^{\alpha}$, i.e.,
$$
\begin{array}{lll}
\displaystyle  W_1(z)=\left(1-z\right)^{\alpha}
=\sum\limits_{\ell=0}^{\infty}\varpi_{1,\ell}^{(\alpha)}z^\ell,\;\;|z|<1.
\end{array}
$$
Such coefficients can be recursively evaluated by
$$
\displaystyle
\begin{array}{lll}
\displaystyle\varpi_{1,0}^{(\alpha)}=1,\;\varpi_{1,\ell}^{(\alpha)}=\left(1-\frac{1+\alpha}{\ell}\right)\varpi_{1,\ell-1}^{(\alpha)},
\;\;\;\ell=0,
1,\ldots
\end{array}
$$
Unfortunately, it turns out to be unstable for the difference scheme for the time dependent equations
by using (4) to approximate the Riemann-Liouville derivatives (or Riesz derivatives).
In order to construct
stable numerical schemes, one often needs to
replace $u(x-\ell h)$ in (4) by $u(x-(\ell-p) h)$, where $p\in\mathds{R}$,
$$
\begin{array}{lll}
\displaystyle \,_{RL}D_{a,x}^{\alpha}u(x)
=\frac{1}{h^{\alpha}}\sum\limits_{\ell=0}^{\left[\frac{x-a}{h}+p\right]}
\varpi_{1,\ell}^{(\alpha)}u(x-(\ell-p) h)+\mathcal {O}(h),\;\;p\neq\frac{\alpha}{2},
\end{array}\eqno(5)
$$
and
$$
\begin{array}{lll}
\displaystyle \,_{RL}D_{a,x}^{\alpha}u(x)
=\frac{1}{h^{\alpha}}\sum\limits_{\ell=0}^{\left[\frac{x-a}{h}+p\right]}
\varpi_{1,\ell}^{(\alpha)}u(x-(\ell-p) h)+\mathcal {O}(h^2),\;\;p=\frac{\alpha}{2},
\end{array}\eqno(6)
$$
which is called as the shifted Gr\"{u}nwald-Letnikov formulas \cite{MT}.

At first sight one can find that the formula (6) has second-order accuracy. However, it needs some
function values on nongrid points for the case $\alpha\in(0,2)$ due to $\ell-p\notin \mathds{N}$. For the convenience of calculation
and in order to avoid the nongrid point values by using the interpolation method, the optimal choose for $p$ is: taking
$p=0$ for $\alpha\in(0,1]$ and taking $p=1$ for $\alpha\in(1,2)$. At this case, the shifted Gr\"{u}nwald-Letnikov formula (5) is used which
gives 1st-order accuracy.

By combining the above shifted Gr\"{u}nwald-Letnikov formula, Tian et al.
\cite{TZD} developed two kinds of 2nd-order numerical schemes for the left Riemann-Liouville derivative as follows,
$$
\begin{array}{lll}
\displaystyle \,_{RL}D_{a,x}^{\alpha}u(x)
=\frac{1}{h^{\alpha}}\sum\limits_{\ell=0}^{\left[\frac{x-a}{h}\right]+1}
g_{1,\ell}^{(\alpha)}u(x-(\ell-1) h)+\mathcal {O}(h^2)
\end{array}
$$
and
$$
\begin{array}{lll}
\displaystyle \,_{RL}D_{a,x}^{\alpha}u(x)
=\frac{1}{h^{\alpha}}\sum\limits_{\ell=0}^{\left[\frac{x-a}{h}\right]+1}
g_{2,\ell}^{(\alpha)}u(x-(\ell-1) h)+\mathcal {O}(h^2),
\end{array}
$$
where the coefficients $g_{1,\ell}^{(\alpha)}$ and $g_{2,\ell}^{(\alpha)}$ are given by
$$
\begin{array}{lll}
\displaystyle g_{1,0}^{(\alpha)}=\frac{\alpha}{2}\varpi_{1,0}^{(\alpha)},\;\;
g_{1,\ell}^{(\alpha)}=\frac{\alpha}{2}\varpi_{1,\ell}^{(\alpha)}+\frac{2-\alpha}{2}\varpi_{1,\ell-1}^{(\alpha)},\;\;\ell\geq1,
\end{array}
$$
and
$$
\begin{array}{lll}
\displaystyle g_{2,0}^{(\alpha)}=\frac{2+\alpha}{4}\varpi_{1,0}^{(\alpha)},\;\;g_{2,1}^{(\alpha)}=\frac{2+\alpha}{4}\varpi_{1,1}^{(\alpha)},\;\;
g_{2,\ell}^{(\alpha)}=\frac{2+\alpha}{4}\varpi_{1,\ell}^{(\alpha)}+\frac{2-\alpha}{4}\varpi_{1,\ell-2}^{(\alpha)},\;\;\ell\geq2.
\end{array}
$$

On the other hand, the $p$-th order $(p\leq6)$ Lubich numerical differential formula
$$
\begin{array}{lll}
\displaystyle \,_{RL}D_{a,x}^{\alpha}u(x)
=\frac{1}{h^{\alpha}}\sum\limits_{\ell=0}^{\left[\frac{x-a}{h}\right]}
\varpi_{p,\ell}^{(\alpha)}u(x-\ell h)+\mathcal {O}(h^p),
\end{array}\eqno(7)
$$
is derived by using the generating function below \cite{L},
$$
\begin{array}{lll}
\displaystyle  W_p(z)=\left(\sum_{\ell}^p\frac{1}{\ell}(1-z)^{\ell}\right)^\alpha.
\end{array}
$$

It should be pointed out that (7) holds for homogeneous initial conditions.
The coefficients $\varpi_{p,\ell}^{(\alpha)}$
satisfy the following equation,
\begin{equation*}
\displaystyle W_p(z)=\left(\sum_{\ell=1}^p\frac{1}{\ell}(1-z)^{\ell}\right)^\alpha=
\sum\limits_{\ell=0}^{\infty}\varpi_{p,\ell}^{(\alpha)}z^\ell,\;|z|<1.
\end{equation*}

The application of (7) to the spatial fractional differential equations with the
Riemann-Liouville derivatives (or Riesz derivatives) is also unstable for $\alpha\in(1,2)$. To overcome this, we can propose the
 following shifted Lubich's numerical differential formula,
$$
\begin{array}{lll}
\displaystyle \,_{RL}D_{a,x}^{\alpha}u(x)
=\frac{1}{h^{\alpha}}\sum\limits_{\ell=0}^{\left[\frac{x-a}{h}\right]+1}
\varpi_{p,\ell}^{(\alpha)}u(x-(\ell-1) h)+\mathcal {O}(h),\;p=1,2,\ldots,6.
\end{array}
$$
But they have
only 1st-order accuracy by simple calculations.

Because of the nonlocal properties of fractional
operators, high-order numerical differential formulas lead to almost the same structure of the difference schemes
as that produced by the 1st-order scheme, but the former can
greatly improve the computational accuracy.
 So it is more and more important and imperative to construct some
 effective and stable high-order numerical approximate formulas.
 At present, the high-order numerical schemes are usually obtained by weighting the shifted and non-shifted
 Gr\"{u}nwald-Letnikov or Lubich difference operators \cite{CD,TZD,WV}. In the present paper,
our main goal is to construct a class of much higher-order numerical differential formulas for Riesz derivatives by using another strategy.
 The key issue of the method
is how to find the new class of the generating functions.
The novelty of the paper is firstly to propose a 2nd-order formula for the Riemann-Liouville
 (or Riesz) derivatives based on its corresponding generating function,
then developed the recurrence relations of the new generating functions.
The main advantage of the method is the one can easily get unconditionally
 stable finite difference scheme.

The paper is organized as follows. In Section 2, we derive a 2nd-order and several kinds of much higher-order
numerical differential formulas for Riesz derivatives. In the meantime,
 the properties of coefficients, together with the convergence-order analysis of the
 2nd-order formula are also studied. In Section 3, the derived 2nd-order formula is applied
to solve the Riesz spatial fractional advection diffusion equation. The solvability, stability and convergence
analyses of the finite difference scheme are studied. Some numerical results are
given in Section 4 in order to confirm the theoretical analyses. We conclude the paper with some
remarks in the last section.

\section{New numerical differential formulas for Riesz derivatives}

In this section, we firstly develop a 2nd-order numerical differential formula for Riemann-Liouville derivatives and Riesz derivatives by using
a new generating function. Next, the properties of the 2nd-order coefficients have been discussed in details. Finally,
 the general forms of the much higher-order numerical differential formulas are also proposed.

\begin{theorem}\label{th:3.1} Suppose $u(x)\in C^{[\alpha]+3}(\mathds{R})$ and all the derivatives of $u(x)$ up to order $[\alpha]+4$ belong to
$L_1(\mathds{R})$. Let
$$
\begin{array}{lll}
\displaystyle \,^{L}\mathcal{B}_{2}^{\alpha}u(x)
=\frac{1}{h^{\alpha}}\sum\limits_{\ell=0}^{\infty}
\kappa_{2,\ell}^{(\alpha)}u\left(x-(\ell-1)h\right).
\end{array}\eqno(8)
$$
Then if $a=-\infty$, one has
$$
\begin{array}{lll}
\displaystyle \,_{RL}D_{-\infty,x}^{\alpha}u(x)
=\,^{L}\mathcal{B}_{2}^{\alpha}u(x)+\mathcal{O}(h^2)
\end{array}\eqno(9)
$$
as $h\rightarrow0$.

Here $\displaystyle\kappa_{2,\ell}^{(\alpha)}\;\left(
\ell=0,1,\ldots,\right)$ are the coefficients of the novel generating function
 $\widetilde{W}_{2}(z)=\left(\frac{3\alpha-2}{2\alpha}-\frac{2(\alpha-1)}{\alpha}z+\frac{\alpha-2}{2\alpha}z^2\right)^{\alpha}$,
that is,
$$
\begin{array}{lll}
\displaystyle\left(\frac{3\alpha-2}{2\alpha}-\frac{2(\alpha-1)}{\alpha}z+\frac{\alpha-2}{2\alpha}z^2\right)^{\alpha}=
\sum\limits_{\ell=0}^{\infty}\kappa_{2,\ell}^{(\alpha)}z^\ell,\;\;|z|<1.
\end{array}\eqno(10)
$$
\end{theorem}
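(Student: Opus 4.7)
The plan is to work in Fourier space, as is standard for analyzing generating-function approximations of Riemann--Liouville operators. Applying the Fourier transform to the definition of $\,^{L}\mathcal{B}_{2}^{\alpha}u$ in (8), and using the shift property together with (10), gives the symbol identity
\[
\mathcal{F}\bigl\{\,^{L}\mathcal{B}_{2}^{\alpha}u\bigr\}(s) \;=\; \frac{e^{\mathrm{i}sh}}{h^{\alpha}}\,\widetilde{W}_{2}\!\bigl(e^{-\mathrm{i}sh}\bigr)\,\hat{u}(s).
\]
Since it is classical, and consistent with (3), that $\mathcal{F}\bigl\{{}_{RL}D_{-\infty,x}^{\alpha}u\bigr\}(s) = (\mathrm{i}s)^{\alpha}\hat{u}(s)$, the proof reduces to estimating the symbol error
\[
R(s,h) \;:=\; \frac{e^{\mathrm{i}sh}}{h^{\alpha}}\,\widetilde{W}_{2}\!\bigl(e^{-\mathrm{i}sh}\bigr) - (\mathrm{i}s)^{\alpha},
\]
and then inverting.

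Setting $w = \mathrm{i}sh$ and $P(w) := \tfrac{3\alpha-2}{2\alpha} - \tfrac{2(\alpha-1)}{\alpha}\,e^{-w} + \tfrac{\alpha-2}{2\alpha}\,e^{-2w}$, so that $\widetilde{W}_{2}(e^{-w}) = P(w)^{\alpha}$, I would first verify by direct substitution that the three coefficients in $P$ are tuned so that $P(0)=0$ and $P'(0)=1$; a further computation gives $P''(0) = -2/\alpha$, hence $P(w) = w - w^{2}/\alpha + \mathcal{O}(w^{3})$. Raising to the power $\alpha$ yields $P(w)^{\alpha} = w^{\alpha}\bigl(1 - w + \mathcal{O}(w^{2})\bigr)$; multiplying by $e^{w} = 1 + w + \mathcal{O}(w^{2})$, the first-order terms cancel exactly, giving $e^{w}\,P(w)^{\alpha} = w^{\alpha}\bigl(1 + \mathcal{O}(w^{2})\bigr)$. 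Dividing by $h^{\alpha}$ and using $w^{\alpha}/h^{\alpha} = (\mathrm{i}s)^{\alpha}$ then yields $|R(s,h)| \le C\,h^{2}\,|s|^{\alpha+2}$ in the regime $|sh|\le 1$.

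The last step converts this symbol estimate into the pointwise error bound (9) via the inverse Fourier transform. For the high-frequency range $|sh|\ge 1$, boundedness of $\widetilde{W}_{2}$ on the unit circle --- one checks that the inner quadratic has roots $z_{-}=1$ and $z_{+}=\tfrac{3\alpha-2}{\alpha-2}$, with $|z_{+}|>1$ for $\alpha\in(1,2)$, so the only singularity of $\widetilde{W}_{2}$ on $\{|z|\le 1\}$ is the integrable branch point at $z=1$ --- gives $|R(s,h)| \le C|s|^{\alpha}$; since $|sh|\ge 1$ implies $|s|^{\alpha}\le h^{2}|s|^{\alpha+2}$, this matches the low-frequency bound. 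The hypothesis that $u\in C^{[\alpha]+3}(\mathds{R})$ with derivatives up to order $[\alpha]+4$ in $L_{1}(\mathds{R})$ yields $|\hat u(s)|\le C(1+|s|)^{-[\alpha]-4}$, so $|s|^{\alpha+2}\,\hat u(s)$ is absolutely integrable and Fourier inversion delivers the $\mathcal{O}(h^{2})$ error, uniformly in $x$.

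The main obstacle is precisely this global-in-$s$ control: the Taylor expansion only produces a good bound when $sh$ is small, and one must stitch it to a uniform estimate of $\widetilde{W}_{2}$ on the unit circle, where the branch-type behaviour at $z=1$ demands care. The algebraic identities $P(0)=0$, $P'(0)=1$, together with the first-order cancellation in $e^{w}P(w)^{\alpha}$ that the value $P''(0)=-2/\alpha$ forces, explain why the specific coefficients $\tfrac{3\alpha-2}{2\alpha},\,-\tfrac{2(\alpha-1)}{\alpha},\,\tfrac{\alpha-2}{2\alpha}$ appear in the generating function, and justify regarding $\widetilde{W}_{2}$ as a genuinely new construction rather than a weighted shift of Lubich's formula.
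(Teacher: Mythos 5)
Your proposal is correct and follows essentially the same route as the paper: Fourier-transform the difference operator, identify the symbol $h^{-\alpha}e^{\mathrm{i}sh}\widetilde{W}_{2}(e^{-\mathrm{i}sh})=(\mathrm{i}s)^{\alpha}\phi(\mathrm{i}sh)$ with $\phi(z)=1+\mathcal{O}(|z|^{2})$, and invert using the decay $|\hat u(s)|\le c(1+|s|)^{-[\alpha]-4}$ guaranteed by the smoothness hypotheses. The one place you go beyond the paper is worthwhile: the paper asserts $|\phi(\mathrm{i}hs)-1|\le c_{1}|s|^{2}h^{2}$ for all $s$ directly from the local Taylor expansion, whereas you correctly observe that this expansion only covers $|sh|\lesssim 1$ and supply the missing high-frequency estimate via boundedness of $\widetilde{W}_{2}$ on the unit circle (the factor $\bigl(1-\tfrac{\alpha-2}{3\alpha-2}z\bigr)^{\alpha}$ having its zero outside the closed disk for $1<\alpha<2$).
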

\begin{proof}
Taking the Fourier transform on both sides of equation (8) yields
$$
\begin{array}{lll}
\displaystyle \mathcal{F}\{\,^{L}\mathcal{B}_{2}^{\alpha}u(x);s\}
&=&\displaystyle\frac{1}{h^\alpha}\sum\limits_{\ell=0}^{\infty}
\kappa_{2,\ell}^{(\alpha)}\mathrm{e}^{-\mathrm{i}(\ell-1)hs}\hat{u}(s)\vspace{0.2 cm}\\
&=&\displaystyle\frac{1}{h^\alpha}\mathrm{e}^{\mathrm{i}hs}\hat{u}(s)
\sum\limits_{\ell=0}^{\infty}
\kappa_{2,\ell}^{(\alpha)}\mathrm{e}^{-\mathrm{i}\ell h s}\vspace{0.2 cm}\\
&=&\displaystyle (\mathrm{i}s)^\alpha\phi(\mathrm{i}hs)\hat{u}(s),
\end{array}
$$
where
$$
\begin{array}{lll}
\displaystyle \phi(z)=\frac{\mathrm{e}^z}{z^\alpha}\widetilde{W}_{2}(\mathrm{e}^{-z})=1-\frac{2\alpha^2-6\alpha+3}{6\alpha}z^2+\mathcal{O}(|z|^3).
\end{array}
$$
So there exists a constant $c_1>0$ satisfying
$$
\begin{array}{lll}
\displaystyle |\phi(\mathrm{i}hs)-1|\leq c_1|s|^2h^2.
\end{array}
$$

Furthermore,
$$
\begin{array}{lll}
\displaystyle \mathcal{F}\{\,^{L}\mathcal{B}_{2}^{\alpha}u(x);s\}
&=&\displaystyle(\mathrm{i}s)^\alpha\hat{u}(s)+
 (\mathrm{i}s)^\alpha[\phi(\mathrm{i}hs)-1]\hat{u}(s)\vspace{0.2 cm}\\&=&\displaystyle
 \mathcal{F}\{\,_{RL}D_{-\infty,x}^{\alpha}u(x);s\}+\hat{\varphi}(h,s),
\end{array}\eqno(11)
$$
where $\hat{\varphi}(h,s)=(\mathrm{i}s)^\alpha[\phi(\mathrm{i}hs)-1]\hat{u}(s)$. It follows that
$$
\begin{array}{lll}
\displaystyle |\hat{\varphi}(h,s)|\leq c_1|s|^{\alpha+2}h^2|\hat{u}(s)|.
\end{array}
$$

Note that $u(x)\in C^{[\alpha]+3}(\mathds{R})$ and all the derivatives of $u(x)$ up to order $[\alpha]+4$ belong to
$L_1(\mathds{R})$. So there exists a positive constant $c_2$ such that
$$
\begin{array}{lll}
\displaystyle |\hat{u}(s)|\leq c_2(1+|s|)^{-([\alpha]+4)}.
\end{array}
$$
Taking the inverse Fourier transform of $\hat{\varphi}(h,s)$ yields
$$
\begin{array}{lll}
\displaystyle |{\varphi}(h,s)|&=&\displaystyle\left|\frac{1}{2\pi \mathrm{i}}
\int_{-\infty}^{\infty}\textmd{e}^{\mathrm{i}sx}\hat{\varphi}(h,s)\textmd{d}s\right|\leq
\frac{1}{2\pi}\int_{-\infty}^{\infty}|\hat{\varphi}(h,s)|\textmd{d}s\vspace{0.2 cm}\\
&\leq&\displaystyle\frac{c_1c_2}{2\pi}\left(\int_{-\infty}^{\infty}(1+|s|)^{\alpha-[\alpha]-2}\textmd{d}s
\right)h^2=ch^2,
\end{array}
$$
in which $\displaystyle c=\frac{c_1c_2}{\pi([\alpha]+1-\alpha)}$.
 Using again the inverse Fourier transform to equation (11) gives
$$
\begin{array}{lll}
\displaystyle \,_{RL}D_{-\infty,x}^{\alpha}u(x)
=\,^{L}\mathcal{B}_{2}^{\alpha}u(x)+\mathcal{O}(h^2).
\end{array}
$$
This finishes the proof.
\end{proof}

By almost the same reasoning, one has the following theorem.

\begin{theorem}\label{th:3.2} Suppose $u(x)\in C^{[\alpha]+n+1}(\mathds{R})$ and all the derivatives of $u(x)$
 up to order $[\alpha]+n+2$ belong to
$L_1(\mathds{R})$.
Then
$$
\begin{array}{lll}
\displaystyle\,^{L}\mathcal{B}_{2}^{\alpha}u(x)
 =\,_{RL}D_{-\infty,x}^{\alpha}u(x)+\sum\limits_{\ell=1}^{n-1}\left(\gamma_{_\ell}^{\alpha}\,_{RL}D_{-\infty,x}^{\alpha+\ell}u(x)\right)h^{\ell}
+\mathcal{O}(h^n),\;n\geq2.
\end{array}
$$
 Here the coefficients $\gamma_{_\ell}^{\alpha}\;(\ell=1,2,\ldots)$ satisfy equation
$\displaystyle
\frac{\mathrm{e}^z}{z^\alpha}\widetilde{W}_{2}(\mathrm{e}^{-z})=1+\sum\limits_{\ell=1}^{\infty}\gamma_\ell^{\alpha}z^\ell
$, in which the coefficients of the first three terms are: $\displaystyle \gamma_{_1}^{\alpha}=0, \gamma_{_2}^{\alpha}=
-\frac{2\alpha^2-6\alpha+3}{6\alpha},
\gamma_{_3}^{\alpha}=
\frac{3\alpha^3-11\alpha^2+12\alpha-4}{12\alpha^2}.$
\end{theorem}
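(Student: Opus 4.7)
The plan is to run the Fourier-analytic argument of Theorem~\ref{th:3.1} one more time, but retain the full Taylor expansion of the symbol rather than just the leading term. Applying the Fourier transform to (8) exactly as before yields
$$\mathcal{F}\{\,^{L}\mathcal{B}_{2}^{\alpha}u(x);s\}=(\mathrm{i}s)^{\alpha}\phi(\mathrm{i}hs)\hat{u}(s),\qquad \phi(z)=\frac{\mathrm{e}^{z}}{z^{\alpha}}\widetilde{W}_{2}(\mathrm{e}^{-z}),$$
so the task becomes to expand $\phi(\mathrm{i}hs)$ to order $n-1$ in $hs$ with a controlled remainder.

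First I would verify that $\phi$ is actually analytic in a neighbourhood of the origin. Writing $P(z)=\frac{3\alpha-2}{2\alpha}-\frac{2(\alpha-1)}{\alpha}z+\frac{\alpha-2}{2\alpha}z^{2}$, a short calculation shows $P(1)=0$ and $P'(1)=-1$, so that $P(\mathrm{e}^{-z})=z+\mathcal{O}(z^{2})$ and consequently $\widetilde{W}_{2}(\mathrm{e}^{-z})=P(\mathrm{e}^{-z})^{\alpha}=z^{\alpha}\bigl(1+\mathcal{O}(z)\bigr)^{\alpha}$. The factor $z^{\alpha}$ cancels the denominator, leaving a genuine Taylor series $\phi(z)=1+\sum_{\ell=1}^{\infty}\gamma_{\ell}^{\alpha}z^{\ell}$. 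The explicit values of $\gamma_{1}^{\alpha}$, $\gamma_{2}^{\alpha}$, $\gamma_{3}^{\alpha}$ would then be read off by Cauchy-multiplying the power series of $\mathrm{e}^{z}$ with the binomial series of the parenthetical factor; the vanishing of $\gamma_{1}^{\alpha}$ is already visible in the expansion computed inside the proof of Theorem~\ref{th:3.1}.

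Truncating at the $n$-th term gives
$$\phi(\mathrm{i}hs)=1+\sum_{\ell=1}^{n-1}\gamma_{\ell}^{\alpha}(\mathrm{i}hs)^{\ell}+R_{n}(\mathrm{i}hs),\qquad |R_{n}(\mathrm{i}hs)|\leq C_{n}|hs|^{n}.$$
Substituting back into the Fourier representation and using $(\mathrm{i}s)^{\alpha+\ell}\hat{u}(s)=\mathcal{F}\{\,_{RL}D_{-\infty,x}^{\alpha+\ell}u(x);s\}$, the main part produces the sum of higher-order Riemann--Liouville derivatives, while the residual $\hat{\psi}(h,s)=(\mathrm{i}s)^{\alpha}R_{n}(\mathrm{i}hs)\hat{u}(s)$ obeys $|\hat{\psi}(h,s)|\leq C_{n}h^{n}|s|^{\alpha+n}|\hat{u}(s)|$. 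The hypotheses $u\in C^{[\alpha]+n+1}(\mathds{R})$ with derivatives up to order $[\alpha]+n+2$ in $L_{1}(\mathds{R})$ give $|\hat{u}(s)|\leq c(1+|s|)^{-([\alpha]+n+2)}$, and hence $\int_{\mathds{R}}|\hat{\psi}(h,s)|\,\mathrm{d}s=\mathcal{O}(h^{n})$, exactly as in the last step of Theorem~\ref{th:3.1}. Taking the inverse Fourier transform delivers the claimed expansion.

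The main obstacle is not the Fourier argument itself, which is a verbatim extension of Theorem~\ref{th:3.1}, but the combinatorial bookkeeping required to identify the coefficients $\gamma_{\ell}^{\alpha}$ in closed form. One must expand $P(\mathrm{e}^{-z})^{\alpha}$ as the $\alpha$-th power of a composition of two power series and then multiply by $\mathrm{e}^{z}$; the algebra grows rapidly with $\ell$, so I would only carry it out explicitly for $\ell=1,2,3$ to match the displayed formulas for $\gamma_{1}^{\alpha},\gamma_{2}^{\alpha},\gamma_{3}^{\alpha}$, and record the higher coefficients implicitly as the Taylor coefficients of $\phi$ in the functional equation $\mathrm{e}^{z}\widetilde{W}_{2}(\mathrm{e}^{-z})/z^{\alpha}=1+\sum_{\ell\geq1}\gamma_{\ell}^{\alpha}z^{\ell}$.
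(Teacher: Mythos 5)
Your proposal is correct and is essentially the proof the paper has in mind: the paper omits the argument with the remark ``by almost the same reasoning'' as Theorem~\ref{th:3.1}, and your plan — expand the symbol $\phi(z)=\mathrm{e}^{z}z^{-\alpha}\widetilde{W}_{2}(\mathrm{e}^{-z})$ to order $n-1$, identify $(\mathrm{i}s)^{\alpha+\ell}\hat{u}(s)$ with the Fourier transform of $\,_{RL}D_{-\infty,x}^{\alpha+\ell}u$, and control the remainder via the decay $|\hat{u}(s)|\leq c(1+|s|)^{-([\alpha]+n+2)}$ — is exactly that extension, including the correct verification that $P(1)=0$, $P'(1)=-1$ makes $\phi$ analytic at the origin.
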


Next, we determine the coefficients $\kappa_{2,\ell}^{(\alpha)}$
of equation (10) by using the similar method presented in \cite{LD}.
$$
\begin{array}{lll}
 \displaystyle
 \widetilde{W}_{2}(z)&=&\displaystyle
 \left(\frac{3\alpha-2}{2\alpha}\right)^{\alpha}\left(1-z\right)^{\alpha}
 \left(1-\frac{\alpha-2}{3\alpha-2}z\right)^{\alpha}\vspace{0.2 cm}\\
 &=&\displaystyle\left(\frac{3\alpha-2}{2\alpha}\right)^{\alpha}
 \left[\sum\limits_{\ell=0}^{\infty} (-1)^\ell \left(\alpha\atop \ell \right)z^\ell\right]
 \left[\sum\limits_{\ell=0}^{\infty} \left(-\frac{\alpha-2}{3\alpha-2}\right)^\ell \left(\alpha\atop \ell \right)z^\ell\right]
\vspace{0.2 cm}\\
 &=&\displaystyle\left(\frac{3\alpha-2}{2\alpha}\right)^{\alpha}\sum\limits_{m=0}^{\infty}\sum\limits_{n=0}^{\infty}
\left(-1\right)^m\left(-\frac{\alpha-2}{3\alpha-2}\right)^n\left(\alpha\atop m
\right)\left(\alpha\atop n \right)z^{m+n}
\vspace{0.2 cm}\\
 &=&\displaystyle\left(\frac{3\alpha-2}{2\alpha}\right)^{\alpha}\sum\limits_{\ell=0}^{\infty}\left[\sum\limits_{m=0}^{\ell}
\left(-1\right)^m\left(-\frac{\alpha-2}{3\alpha-2}\right)^{\ell-m}\left(\alpha\atop m
\right)\left(\alpha\atop \ell-m\right)\right]z^\ell
\vspace{0.2 cm}\\
 &=&\displaystyle\sum\limits_{\ell=0}^{\infty}\left[\left(\frac{3\alpha-2}{2\alpha}\right)^{\alpha}\sum\limits_{m=0}^{\ell}
\left(-1\right)^\ell\left(\frac{\alpha-2}{3\alpha-2}\right)^m\left(\alpha\atop m
\right)\left(\alpha\atop \ell-m\right)\right]z^\ell
\vspace{0.2 cm}\\
 &=&\displaystyle\sum\limits_{\ell=0}^{\infty}\left[\left(\frac{3\alpha-2}{2\alpha}\right)^{\alpha}\sum\limits_{m=0}^{\ell}
\left(\frac{\alpha-2}{3\alpha-2}\right)^m\varpi_{1,m}^{(\alpha)}
\varpi_{1,\ell-m}^{(\alpha)}\right]z^\ell.
\end{array}
$$
Comparing this equation with equation (10),
 one gets
$$
\begin{array}{lll}
 \displaystyle\kappa_{2,\ell}^{(\alpha)}=
\left(\frac{3\alpha-2}{2\alpha}\right)^{\alpha}\sum\limits_{m=0}^{\ell}
\left(\frac{\alpha-2}{3\alpha-2}\right)^m\varpi_{1,m}^{(\alpha)}
\varpi_{1,\ell-m}^{(\alpha)},\;\;\ell=0,1,\ldots
\end{array}\eqno(12)
$$
 With the help of
 equation (12) and automatic differentiation techniques \cite{R}, one has the following recursive relations,
$$\left\{
\begin{array}{lll}
 \displaystyle\kappa_{2,0}^{(\alpha)}&=&\displaystyle
\left(\frac{3\alpha-2}{2\alpha}\right)^{\alpha},\vspace{0.2 cm}\\
 \displaystyle\kappa_{2,1}^{(\alpha)}&=&\displaystyle
\frac{4\alpha(1-\alpha)}{3\alpha-2}\kappa_{2,0}^{(\alpha)},\vspace{0.2 cm}\\ \displaystyle
\kappa_{2,\ell}^{(\alpha)}&=&\displaystyle
\frac{1}{\ell(3\alpha-2)}\left[4(1-\alpha)(\alpha-\ell+1)\kappa_{2,\ell-1}^{(\alpha)}\right.
\vspace{0.2 cm}\\ &&\displaystyle\left.+(\alpha-2)(2\alpha-\ell+2)\kappa_{2,\ell-2}^{(\alpha)}
\right],\;\;\ell\geq2.
\end{array}\right.\eqno(13)
$$

The above method is intuitive. Besides this, we can use another method to determine the coefficients $\kappa_{2,\ell}^{(\alpha)}$.
Substituting $z=\mathrm{e}^{-\mathrm{i}x}$ into (10), the coefficients $\kappa_{2,\ell}^{(\alpha)}$ can be represented by the
following integral form with the help of the inverse Fourier transform,
$$
\begin{array}{lll}
 \displaystyle
\kappa_{2,\ell}^{(\alpha)}=
\frac{1}{2\pi\mathrm{i}} \int_{0}^{2\pi}\widetilde{W}_{2}(-\mathrm{i}x)\mathrm{e}^{\mathrm{i}\ell x}\mathrm{d}x,
\end{array}
$$
where $\widetilde{W}_{2}(-\mathrm{i}x)=\left(\frac{3\alpha-2}{2\alpha}-\frac{2(\alpha-1)}{\alpha}
\mathrm{e}^{-\mathrm{i}x}+\frac{\alpha-2}{2\alpha}\mathrm{e}^{-2\mathrm{i}x}\right)^{\alpha}$.
This type of integrals can be computed by the fast Fourier transform method \cite{P}.

Next, we study the properties of the coefficients $\kappa_{2,\ell}^{(\alpha)}$ $(\ell=0,1,\ldots)$.

\begin{theorem}\label{th:2.3} The coefficients $\kappa_{2,\ell}^{(\alpha)}\;(\ell=0,1,\ldots)$ have the following properties for $1<\alpha<2$,\\
\verb"(i)"~$\displaystyle\kappa_{2,0}^{(\alpha)}=\left(\frac{3\alpha-2}{2\alpha}\right)^{\alpha}>0$,\;
 $\displaystyle\kappa_{2,1}^{(\alpha)}=\frac{4\alpha(1-\alpha)}{3\alpha-2}\kappa_{2,0}^{(\alpha)}<0$;\vspace{0.2 cm}\\
\verb"(ii)"~$\displaystyle\kappa_{2,2}^{(\alpha)}=\frac{\alpha(8\alpha^3-21\alpha^2+16\alpha-4)}{(3\alpha-2)^2}
\kappa_{2,0}^{(\alpha)}$.\;
$\kappa_{2,2}^{(\alpha)}<0$ if $\alpha\in(1,\alpha^{\ast})$, while $\kappa_{2,2}^{(\alpha)}\geq0$ if $\alpha\in[\alpha^{\ast},2)$, where
$\displaystyle\alpha^{\ast}=\frac{7}{8}+\frac{\sqrt[3]{621+48\sqrt{87}}}{24}+\frac{19}{\sqrt[3]{621+48\sqrt{87}}}\approx1.5333$;\vspace{0.2 cm}\\
\verb"(iii)"~ $\displaystyle\kappa_{2,\ell}^{(\alpha)}\geq0$ if $\ell\geq3$;\vspace{0.2 cm}\\
\verb"(iv)"~ $\displaystyle\kappa_{2,\ell}^{(\alpha)}\sim-\frac{\sin\left(\pi\alpha\right)\Gamma(\alpha+1)}{\pi}\ell^{-\alpha-1} $  as $\ell\rightarrow\infty$;\vspace{0.2 cm}\\
\verb"(v) "~$\displaystyle\kappa_{2,\ell}^{(\alpha)}\rightarrow 0 $ as $\ell\rightarrow\infty$;\vspace{0.2 cm}\\
\verb"(vi) "~$\displaystyle\sum\limits_{\ell=0}^{\infty}\varpi_{2,\ell}^{(\alpha)}=0.$
\end{theorem}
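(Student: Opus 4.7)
My plan proceeds through (i)--(vi) in order, using the recurrence (13), the factorization of the generating function (10), and singularity analysis.

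Parts (i) and (ii) are direct sign analyses. For (i), (13) gives $\kappa_{2,0}^{(\alpha)}=((3\alpha-2)/(2\alpha))^{\alpha}>0$ since $3\alpha-2>1>0$ on $(1,2)$, and $\kappa_{2,1}^{(\alpha)}=[4\alpha(1-\alpha)/(3\alpha-2)]\kappa_{2,0}^{(\alpha)}<0$ since the only negative factor is $1-\alpha$. For (ii), substituting $\kappa_{2,0}^{(\alpha)}$ and $\kappa_{2,1}^{(\alpha)}$ into the $\ell=2$ instance of (13) and simplifying yields the stated formula, whose sign reduces to that of the cubic $g(\alpha):=8\alpha^{3}-21\alpha^{2}+16\alpha-4$. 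Computing $g(1)=-1<0$, $g(2)=8>0$, and showing the discriminant of $g$ is negative (so $g$ has exactly one real root) locates a unique root $\alpha^{\ast}\in(1,2)$; Cardano's formula applied to $g$ produces the closed form in the statement, and the sign dichotomy follows from uniqueness of the real root together with the endpoint values.

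Part (iii) is the main obstacle. My plan is strong induction via the recurrence rewritten as
\[
\ell(3\alpha-2)\kappa_{2,\ell}^{(\alpha)}=4(\alpha-1)(\ell-1-\alpha)\kappa_{2,\ell-1}^{(\alpha)}+(2-\alpha)(\ell-2-2\alpha)\kappa_{2,\ell-2}^{(\alpha)}.
\]
For $\alpha\in(1,2)$ and $\ell\ge 6$, both scalar coefficients on the right are strictly positive (since $\ell-1-\alpha\ge 4$ and $\ell-2-2\alpha\ge 4-2\alpha>0$), so non-negativity propagates upward. It therefore suffices to verify $\kappa_{2,\ell}^{(\alpha)}\ge 0$ for the base cases $\ell=3,4,5$. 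For $\ell=3$, substituting the explicit expressions for $\kappa_{2,1}^{(\alpha)}$ and $\kappa_{2,2}^{(\alpha)}$ into the recursion simplifies to
\[
\kappa_{2,3}^{(\alpha)}=\frac{4\alpha(\alpha-1)^{2}(2-\alpha)(8\alpha^{2}-7\alpha+2)}{3(3\alpha-2)^{3}}\kappa_{2,0}^{(\alpha)},
\]
and $8\alpha^{2}-7\alpha+2$ is strictly positive everywhere (its discriminant $49-64=-15$ is negative), so positivity is immediate. The cases $\ell=4,5$ follow the same template: factor the recursion output, isolate the manifestly positive polynomial pieces, and verify that the residual polynomial in $\alpha$ has no zero on $(1,2)$ by endpoint evaluation combined with Descartes' rule or Sturm's algorithm. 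The bookkeeping for these two base cases is the principal nuisance of the whole proof.

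For (iv) the key ingredient is the factorization
\[
\widetilde{W}_{2}(z)=\kappa_{2,0}^{(\alpha)}(1-z)^{\alpha}(1+z/\rho)^{\alpha},\qquad \rho:=(3\alpha-2)/(2-\alpha)>1,
\]
which I verify by solving the inner quadratic of $\widetilde{W}_{2}$ (discriminant $4\alpha^{2}$, roots $z=1$ and $z=-\rho$). Since $\rho>1$, the singularity $z=-\rho$ lies strictly outside the closed unit disk, so classical singularity analysis (Flajolet--Odlyzko transfer, or Darboux) shows that the leading asymptotics of $\kappa_{2,\ell}^{(\alpha)}$ are controlled entirely by the algebraic singularity at $z=1$. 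A direct computation gives $(1+1/\rho)^{\alpha}=(2\alpha/(3\alpha-2))^{\alpha}=1/\kappa_{2,0}^{(\alpha)}$, hence $\widetilde{W}_{2}(z)\sim(1-z)^{\alpha}$ as $z\to 1$, and the classical binomial asymptotic $\varpi_{1,\ell}^{(\alpha)}\sim -\sin(\pi\alpha)\Gamma(\alpha+1)/(\pi\ell^{\alpha+1})$ (Stirling plus the reflection formula) delivers the stated estimate. Part (v) is immediate from (iv). For (vi), reading the $\varpi_{2,\ell}^{(\alpha)}$ in the statement as $\kappa_{2,\ell}^{(\alpha)}$ (an evident typo), the decay in (iv) makes the series absolutely convergent, so by Abel's theorem $\sum_{\ell\ge 0}\kappa_{2,\ell}^{(\alpha)}=\widetilde{W}_{2}(1)$; direct computation $(3\alpha-2)/(2\alpha)-2(\alpha-1)/\alpha+(\alpha-2)/(2\alpha)=0$ inside the $\alpha$-th power then gives $\widetilde{W}_{2}(1)=0$.
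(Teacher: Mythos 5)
Your plan is correct, and for parts (i)--(iii) it coincides with the paper's proof: the paper likewise reads (i) and (ii) off the explicit formulas and the cubic $8\alpha^3-21\alpha^2+16\alpha-4$, and proves (iii) exactly as you propose, by checking the base cases $\ell=3,4,5$ by hand (your $\kappa_{2,3}^{(\alpha)}$ matches its formula with $\mu_1(\alpha)=8\alpha^2-7\alpha+2$) and then propagating nonnegativity through the recurrence for $\ell\geq 6$, where both coefficients $4(\alpha-1)(\ell-1-\alpha)$ and $(2-\alpha)(\ell-2-2\alpha)$ are positive. Two small remarks there: for $\ell=6$ one only has $\ell-1-\alpha>3$, not $\geq 4$ (positivity is all you need, so nothing breaks), and the paper actually carries out the $\ell=4,5$ sign checks via explicit regroupings of the sextic/quintic $\mu_2,\mu_3$, whereas you defer them to Sturm/Descartes --- a legitimate but still-to-be-executed step, and genuinely the most laborious part of the whole theorem.

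For (iv)--(vi) you take a genuinely different and, in my view, cleaner route. The paper proves (iv) by inserting the Tricomi--Erd\'elyi asymptotic expansion of $\Gamma(\ell-m-\alpha)/\Gamma(\ell-m+1)$ (with generalized Bernoulli polynomials) into the convolution formula (12) and then passing to the limit in the sum over $m$; it proves (v) not from (iv) but by a separate comparison bounding $|\kappa_{2,\ell}^{(\alpha)}|$ by a constant multiple of $\varpi_{1,\ell}^{(\alpha)}$; and it disposes of (vi) by citing an earlier paper. Your singularity-analysis argument exploits the factorization $\widetilde{W}_2(z)=\kappa_{2,0}^{(\alpha)}(1-z)^{\alpha}(1+z/\rho)^{\alpha}$ with $\rho=(3\alpha-2)/(2-\alpha)>1$ for $\alpha\in(1,2)$, the normalization $(1+1/\rho)^{\alpha}\kappa_{2,0}^{(\alpha)}=1$, and a transfer theorem to reduce everything to the coefficients of $(1-z)^{\alpha}$; this packages (iv), (v) and (vi) into one coherent picture, makes the interchange of limits in the paper's (iv) unnecessary, and gives an explicit proof of (vi) (including the observation that $\varpi_{2,\ell}^{(\alpha)}$ in the statement must be read as $\kappa_{2,\ell}^{(\alpha)}$, and that the three polynomial coefficients of $\widetilde{W}_2$ sum to zero so $\widetilde{W}_2(1)=0$). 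What the paper's route buys in exchange is a full asymptotic expansion in powers of $\ell^{-1}$ rather than just the leading term, and elementary self-containedness (no appeal to Flajolet--Odlyzko or Darboux).
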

\begin{proof}
\verb"(i)" The direct computations give these results by formula (12).

\verb"(ii)"
With the  help of the exact roots formula of cubic equation, one can easily get the conclusion.

\verb"(iii)"
When $\ell=3,4,5$,  we have the following results in view of (12),
$$
\begin{array}{ll}\displaystyle
\kappa_{2,3}^{(\alpha)}=\frac{4\alpha(2-\alpha)(\alpha-1)^2\mu_{1}(\alpha)}{3(3\alpha-2)^3}\kappa_{2,0}^{(\alpha)}
,\;\;
\kappa_{2,4}^{(\alpha)}=\frac{\alpha(\alpha-1)(\alpha-2)\mu_{2}(\alpha)}{6(3\alpha-2)^4}\kappa_{2,0}^{(\alpha)},
\end{array}
$$
and
$$
\begin{array}{ll}\displaystyle
\kappa_{2,5}^{(\alpha)}=\frac{2\alpha(2-\alpha)(\alpha-1)^2\mu_{3}(\alpha)}{15(3\alpha-2)^5}\kappa_{2,0}^{(\alpha)},
\end{array}
$$
where
$$
\begin{array}{ll}\displaystyle
\mu_{1}(\alpha)=8\alpha^2-7\alpha+2,
\end{array}
$$
$$
\begin{array}{ll}\displaystyle
\mu_{2}(\alpha)=64\alpha^5-304\alpha^4+507\alpha^3-394\alpha^2+148\alpha-24,
\end{array}
$$
$$
\begin{array}{ll}\displaystyle
\mu_{3}(\alpha)=64\alpha^6-464\alpha^5+1239\alpha^4-1536\alpha^3+984\alpha^2-320\alpha+48.
\end{array}
$$
By simple computations, one has
$$
\begin{array}{ll}\displaystyle
\mu_{1}(\alpha)=8\alpha(\alpha-1)+\alpha+2>0,
\end{array}
$$
$$
\begin{array}{ll}\displaystyle
\mu_{2}(\alpha)=(\alpha-1)^2\left[\alpha(8\alpha-11)^2-30\alpha-36\right]-15(\alpha-1)-3<0,
\end{array}
$$
$$
\begin{array}{ll}\displaystyle
\mu_{3}(\alpha)=(\alpha-1)^2(\alpha-2)\left[\alpha(8\alpha-13)^2-82\alpha-20\right]+53(\alpha-1)^2+60(\alpha-1)+15>0.
\end{array}
$$
So, $\displaystyle\kappa_{2,3}^{(\alpha)}$, $\displaystyle\kappa_{2,4}^{(\alpha)}$ and $\displaystyle\kappa_{2,5}^{(\alpha)}$ are all
 positive for $1<\alpha<2$. If $\ell\geq6$, we know that $\displaystyle\kappa_{2,\ell}^{(\alpha)}\geq0$
  by the recurrence relation (13). It immediately follows that
$\displaystyle\kappa_{2,\ell}^{(\alpha)}\geq0$ for $\ell\geq3$.

\verb"(iv)" Using
$$
\begin{array}{lll}
 \displaystyle\frac{(-1)^k}{\Gamma(\alpha-k+1)}=-\frac{\sin\left(\pi\alpha\right)}{\pi}\Gamma(k-\alpha),
\end{array}
$$
 the coefficients $\kappa_{2,\ell}^{(\alpha)}$ can be rewritten as
$$
\begin{array}{lll}
 \displaystyle \kappa_{2,\ell}^{(\alpha)}=-\frac{\sin\left(\pi\alpha\right)\Gamma(\alpha+1)}{\pi}
\left(\frac{3\alpha-2}{2\alpha}\right)^{\alpha} \sum\limits_{m=0}^{\ell}
\left(\frac{\alpha-2}{3\alpha-2}\right)^m\varpi_{1,m}^{(\alpha)}
\frac{\Gamma(\ell-m-\alpha)}{\Gamma(\ell-m+1)}.
\end{array}
$$

It is known that the ratio expansion of two gamma function
$$
\begin{array}{lll}
 \displaystyle \frac{\Gamma(z+a)}{\Gamma(z+b)}=z^{a-b}\left[\sum_{k=0}^{N}(-1)^{k}\frac{\Gamma(b-a+k)}{k!\Gamma(b-a)}B_{k}^{(a-b+1)}(a)z^{-k}+
 \mathcal{O}\left(z^{-N-1}\right)\right]
\end{array}
$$
holds as $z\rightarrow\infty$ with $|\arg(z+a)|<\pi$ \cite{TE}. Here $B_{k}^{(\sigma)}(a)$ are the generalized Bernoulli polynomials defined by \cite{N}
$$
\begin{array}{lll}
 \displaystyle \left(\frac{z}{\mathrm{e}^z-1}\right)\mathrm{e}^{az}=\sum_{k=0}^{\infty}\frac{z^k}{k!}B_{k}^{(\sigma)}(a),\;B_{0}^{(\sigma)}(a)=1,\;|z|<2\pi,
\end{array}
$$
where $B_{k}^{(\sigma)}(a)$ has the following explicit formula \cite{S}
$$
\begin{array}{lll}
 \displaystyle B_{k}^{(\sigma)}(a)&=& \displaystyle\sum_{\ell=0}^{k}\left(k\atop \ell\right)\left(\sigma+\ell-1\atop \ell\right)
 \frac{\ell!}{(2\ell)!}\sum_{j=0}^{\ell}(-1)^j\left(\ell\atop j\right)j^{2\ell}(a+j)^{k-\ell}\vspace{0.2 cm}\\
 && \displaystyle\times F\left[\ell-k,\ell-\sigma;2\ell+1;\frac{j}{a+j}\right],
\end{array}
$$
in which $F[a,b;c;z]$ is the Gaussian hypergeometric function defined in \cite{B}
$$
\begin{array}{lll}
 \displaystyle F[a,b;c;z]=1+\frac{ab}{c}\frac{z}{1!}+\frac{a(a+1)b(b+1)}{c(c+1)}\frac{z^2}{2!}+\ldots
\end{array}
$$

So one has
$$
\begin{array}{lll}
 \displaystyle \kappa_{2,\ell}^{(\alpha)}&=& \displaystyle-\frac{\sin\left(\pi\alpha\right)\Gamma(\alpha+1)}{\pi}
\left(\frac{3\alpha-2}{2\alpha}\right)^{\alpha} \sum\limits_{m=0}^{\ell}
\left(\frac{\alpha-2}{3\alpha-2}\right)^m\varpi_{1,m}^{(\alpha)}\vspace{0.2 cm}\\&& \displaystyle
\times\left[\sum_{k=0}^{N}(-1)^{k}\frac{\Gamma(\alpha+1+k)}{k!\Gamma(\alpha+1)}B_{k}^{(-\alpha)}(a)\ell^{-k}+
 \mathcal{O}\left(\ell^{-N-1}\right)\right]\ell^{-\alpha-1}.
\end{array}
$$
Noting that
$$
\begin{array}{lll}
 \displaystyle  \sum\limits_{m=0}^{\ell}
\left(\frac{\alpha-2}{3\alpha-2}\right)^m\varpi_{1,m}^{(\alpha)}\longrightarrow
\left(\frac{2\alpha}{3\alpha-2}\right)^{\alpha}\;\;as \;\;\ell\rightarrow\infty,
\end{array}
$$
 one can get the coefficient $\kappa_{2,\ell}^{(\alpha)}$ follows the power-law asymptotics,
$$
\begin{array}{lll}
 \displaystyle \kappa_{2,\ell}^{(\alpha)}\sim-\frac{\sin\left(\pi\alpha\right)\Gamma(\alpha+1)}{\pi}
\ell^{-\alpha-1}\;\;as \;\;\ell\rightarrow\infty.
\end{array}
$$

\verb"(v)" From (iv), the asymptotics of $\kappa_{2,\ell}^{(\alpha)}$ holds. Here we would rather use another
approach to show it, where one can see that the $\kappa_{2,\ell}^{(\alpha)}$ is bounded by $\varpi_{1,\ell}^{(\alpha)}$.
$$
\begin{array}{lll}
 \displaystyle\left|\kappa_{2,\ell}^{(\alpha)}\right|&=&\displaystyle
\left(\frac{3\alpha-2}{2\alpha}\right)^{\alpha}\left|\sum\limits_{m=0}^{\ell}
\left(\frac{\alpha-2}{3\alpha-2}\right)^m\varpi_{1,m}^{(\alpha)}
\varpi_{1,\ell-m}^{(\alpha)}\right|\vspace{0.2 cm}\\&\leq&\displaystyle
\left(\frac{3\alpha-2}{2\alpha}\right)^{\alpha}\sum\limits_{m=0}^{\ell}
\left(\frac{2-\alpha}{3\alpha-2}\right)^m\left|\varpi_{1,m}^{(\alpha)}
\varpi_{1,\ell-m}^{(\alpha)}\right|\vspace{0.2 cm}\\
&=&\displaystyle
\left(\frac{3\alpha-2}{2\alpha}\right)^{\alpha}\left\{\left[1+\left(\frac{2-\alpha}{3\alpha-2}\right)^\ell\right]\varpi_{1,\ell}^{(\alpha)}
-\left[\frac{2-\alpha}{3\alpha-2}+\left(\frac{2-\alpha}{3\alpha-2}\right)^{\ell-1}\right]\varpi_{1,1}^{(\alpha)}\varpi_{1,\ell-1}^{(\alpha)}\right.
\vspace{0.2 cm}\\
&&\displaystyle
\left.
+\sum\limits_{m=2}^{\ell-2}
\left(\frac{2-\alpha}{3\alpha-2}\right)^m\varpi_{1,m}^{(\alpha)}
\varpi_{1,\ell-m}^{(\alpha)}
\right\},\;\;\ell\geq2.
\end{array}
$$
One can see that
$$
\begin{array}{lll}
 \displaystyle\frac{\varpi_{1,m}^{(\alpha)}
\varpi_{1,\ell-m}^{(\alpha)}}{\varpi_{1,m+1}^{(\alpha)}
\varpi_{1,\ell-m-1}^{(\alpha)}}=\frac{\varpi_{1,m}^{(\alpha)}\left(1-\frac{1+\alpha}{\ell-m}\right)\varpi_{1,\ell-m-1}^{(\alpha)}}
{\left(1-\frac{1+\alpha}{m+1}\right)\varpi_{1,m}^{(\alpha)}\varpi_{1,\ell-m-1}^{(\alpha)}}\geq1,\;m=2,3,\ldots,\left[\frac{\ell}{2}\right].
\end{array}
$$
Recalling that
$\varpi_{1,0}^{(\alpha)}=1,\;\varpi_{1,1}^{(\alpha)}=-\alpha$ and
 $\varpi_{1,\ell}^{(\alpha)}\geq0$ for $\ell\geq 2$ and $1<\alpha<2$,
 one gets
$$
\begin{array}{lll}
 \displaystyle\varpi_{1,m}^{(\alpha)}
\varpi_{1,\ell-m}^{(\alpha)}\leq\varpi_{1,2}^{(\alpha)}
\varpi_{1,\ell-2}^{(\alpha)},\;m=2,3,\ldots,\left[\frac{\ell}{2}\right].
\end{array}
$$

It immediately follows that
$$
\begin{array}{lll}
 \displaystyle\left|\kappa_{2,\ell}^{(\alpha)}\right|
&\leq&\displaystyle
\left(\frac{3\alpha-2}{2\alpha}\right)^{\alpha}\left\{\left[1+\left(\frac{2-\alpha}{3\alpha-2}\right)^\ell\right]\varpi_{1,\ell}^{(\alpha)}
-\left[\frac{2-\alpha}{3\alpha-2}+\left(\frac{2-\alpha}{3\alpha-2}\right)^{\ell-1}\right]\varpi_{1,1}^{(\alpha)}\varpi_{1,\ell-1}^{(\alpha)}\right.
\vspace{0.2 cm}\\
&&\displaystyle
\left.
+\sum\limits_{m=2}^{\infty}
\left(\frac{2-\alpha}{3\alpha-2}\right)^m\varpi_{1,2}^{(\alpha)}
\varpi_{1,\ell-2}^{(\alpha)}
\right\}\vspace{0.2 cm}\\
&=&\displaystyle
\left(\frac{3\alpha-2}{2\alpha}\right)^{\alpha}M(\ell,\alpha)\varpi_{1,\ell}^{(\alpha)},
\;\;\ell\geq2,
\end{array}
$$
where
$$
\begin{array}{lll}
 \displaystyle M(\ell,\alpha)&=&\displaystyle\left[1+\left(\frac{2-\alpha}{3\alpha-2}\right)^{\ell}\right]
 +\alpha\left[\frac{2-\alpha}{3\alpha-2}+\left(\frac{2-\alpha}{3\alpha-2}\right)^{\ell-1}\right]
 \frac{\ell}{\ell-1-\alpha}
 \vspace{0.2 cm}\\
&&\displaystyle+\frac{\alpha(3\alpha-2)}{8}\left(\frac{2-\alpha}{3\alpha-2}\right)^{2}\frac{\ell(\ell-1)}{(\ell-1-\alpha)(\ell-2-\alpha)}.
\end{array}
$$
Because
$$
\begin{array}{lll}
 \displaystyle \lim_{\ell\rightarrow+\infty}M(\ell,\alpha)=1+\frac{\alpha(2-\alpha)(10-\alpha)}{8(3\alpha-2)}>0, \;\;1<\alpha<2,
\end{array}
$$
 there exists a positive constant $M(\alpha)$, subject to $M(\ell,\alpha)\leq M(\alpha)$ for $1<\alpha<2$. In other words, we have
$$
\begin{array}{lll}
 \displaystyle \left|\kappa_{2,\ell}^{(\alpha)}\right|
\leq M(\alpha)\left(\frac{3\alpha-2}{2\alpha}\right)^{\alpha} \varpi_{1,\ell}^{(\alpha)}.
\end{array}
$$
So the 2nd-order coefficient $\kappa_{2,\ell}^{(\alpha)}$ is bounded by the 1st-order coefficient $\varpi_{1,\ell}^{(\alpha)}$.

It is known that the positive series $\sum\limits_{j=2}^{\infty}\varpi_{1,\ell}^{(\alpha)}$
is convergent \cite{LD}. Therefore the series $\sum\limits_{j=2}^{\infty}\left|\kappa_{2,\ell}^{(\alpha)}\right|$
is also convergent. So the asymptotics of $\kappa_{2,\ell}^{(\alpha)}$ holds.

\verb"(vi)" By almost the same method used in \cite{LD}, the equality holds.
\end{proof}

{\it{\bf Remark 1.} For the right Liouville derivative, the approximation
$$
\begin{array}{lll}
\displaystyle \,_{RL}D_{x,+\infty}^{\alpha}u(x)
=\,^{R}\mathcal{B}_{2}^{\alpha}u(x)+\mathcal{O}(h^2),
\end{array}
$$
holds under the conditions Theorem \ref{th:3.1}.
Here, right difference operator $\,^{R}\mathcal{B}_{2}^{\alpha}$ is defined by
$$
\begin{array}{lll}
\displaystyle\,^{R}\mathcal{B}_{2}^{\alpha}u(x)
=\frac{1}{h^{\alpha}}\sum\limits_{\ell=0}^{\infty}
\kappa_{2,\ell}^{(\alpha)}u\left(x+(\ell-1)h\right).
\end{array}
$$}

{\it{\bf Remark 2.} If $u(x)$ is defined on $[a,b]$ satisfying the homogeneous conditions $u(a)=u(b)=0$,
by suitable smooth extension one can get
$$
\begin{array}{lll}
\displaystyle
\,_{RL}D_{a,x}^{\alpha}u(x)=\,_{RL}D_{-\infty,x}^{\alpha}u(x)
=\,^{L}\mathcal{B}_{2}^{\alpha}u(x)+\mathcal{O}(h^2)=\,^{L}\mathcal{A}_{2}^{\alpha}u(x)+\mathcal{O}(h^2),
\end{array}\eqno(14)
$$
and
$$
\begin{array}{lll}
\displaystyle
\,_{RL}D_{x,b}^{\alpha}u(x)=
\,_{RL}D_{x,+\infty}^{\alpha}u(x)=\,^{R}\mathcal{B}_{2}^{\alpha}u(x)+\mathcal{O}(h^2)
=\,^{R}\mathcal{A}_{2}^{\alpha}u(x)+\mathcal{O}(h^2).
\end{array}\eqno(15)
$$
Here the operators $\,^{L}\mathcal{A}_{2}^{\alpha}$ and $\,^{R}\mathcal{A}_{2}^{\alpha}$ are defined as follows,
$$
\begin{array}{lll}
\displaystyle
\,^{L}\mathcal{A}_{2}^{\alpha}u(x)
=\frac{1}{h^{\alpha}}\sum\limits_{\ell=0}^{[\frac{x-a}{h}]+1}
\kappa_{2,\ell}^{(\alpha)}u\left(x-(\ell-1)h\right),
\end{array}
$$
and
$$
\begin{array}{lll}
\displaystyle
\,^{R}\mathcal{A}_{2}^{\alpha}u(x)
=\frac{1}{h^{\alpha}}\sum\limits_{\ell=0}^{[\frac{b-x}{h}]+1}
\kappa_{2,\ell}^{(\alpha)}u\left(x+(\ell-1)h\right).
\end{array}
$$}

Hence, combining equations (1), (14) and (15), one can obtain a new kind of
 2nd-order difference scheme for Riesz derivatives (1),
$$
\begin{array}{lll}
\displaystyle \frac{\partial^\alpha
u(x)}{\partial{|x|^\alpha}}= C_\alpha\left(
\,^{L}\mathcal{A}_{2}^{\alpha}u(x)+\,^{R}\mathcal{A}_{2}^{\alpha}u(x)\right)+\mathcal {O}(h^2).
\end{array}\eqno(16)
$$
Finally, we give the more general high-order numerical algorithms below.

\begin{theorem} \label{th:2.4} Let $u(x)\in C^{[\alpha]+p+1}(\mathds{R})$ and all the derivatives of $u(x)$ up to order $[\alpha]+p+2$ belong to
$L_1(\mathds{R})$. Set
$$
\begin{array}{lll}
\displaystyle \,^{L}\mathcal{B}_{p}^{\alpha}u(x)
=\frac{1}{h^{\alpha}}\sum\limits_{\ell=0}^{\infty}
\kappa_{p,\ell}^{(\alpha)}u\left(x-(\ell-1)h\right),
\end{array}
$$
and
$$
\begin{array}{lll}
\displaystyle \,^{R}\mathcal{B}_{p}^{\alpha}u(x)
=\frac{1}{h^{\alpha}}\sum\limits_{\ell=0}^{\infty}
\kappa_{p,\ell}^{(\alpha)}u\left(x+(\ell-1)h\right).
\end{array}
$$
Then
$$
\begin{array}{lll}
\displaystyle \,_{RL}D_{-\infty,x}^{\alpha}u(x)
=\,^{L}\mathcal{B}_{p}^{\alpha}u(x)+\mathcal{O}(h^p),\;\;p\geq3,
\end{array}
$$
and
$$
\begin{array}{lll}
\displaystyle \,_{RL}D_{x,+\infty}^{\alpha}u(x)
=\,^{R}\mathcal{B}_{p}^{\alpha}u(x)+\mathcal{O}(h^p),\;\;p\geq3.
\end{array}
$$

Here the generating functions with coefficients
$\displaystyle\kappa_{p,\ell}^{(\alpha)}\;\left(
\ell=0,1,\ldots\right)$ for $p\geq3$ are
$$
\begin{array}{lll}
\displaystyle \widetilde{W}_{p}(z)=\left((1-z)+\frac{\alpha-2}{2\alpha}\left(1-z\right)^2+\sum_{k=3}^{p}
\frac{\lambda_{{k-1},{k-1}}^{(\alpha)}}{\alpha}(1-z)^k\right)^{\alpha}
,
\end{array}
$$
i.e.,
$$
\begin{array}{lll}
\displaystyle\widetilde{ W}_{p}(z)=\sum\limits_{\ell=0}^{\infty}\kappa_{p,\ell}^{(\alpha)}z^\ell,\;|z|<1,\;p\geq3,
\end{array}
$$
in which the parameters $\lambda_{{{k-1},{k-1}}}^{(\alpha)}$ $(k=3,4,\ldots)$ can be determined by the following equation
$$
\begin{array}{lll}
\displaystyle W_{k,s}\left(e^{-z}\right)\frac{e^z}{z^\alpha}=1-\sum_{\ell=k}^{\infty}\lambda_{k,\ell}^{(\alpha)}z^{\ell},\;\;k=2,3,\ldots
\end{array}
$$
\end{theorem}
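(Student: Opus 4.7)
The plan is to adapt the Fourier-transform argument used for Theorem~\ref{th:3.1}, the new content being a consistency relation for the higher-order generating function. Setting $\phi_p(z):=\frac{\mathrm{e}^z}{z^\alpha}\widetilde{W}_p(\mathrm{e}^{-z})$, one first computes as in Theorem~\ref{th:3.1} that
$$\mathcal{F}\{\,^{L}\mathcal{B}_p^\alpha u(x);s\}=(\mathrm{i}s)^\alpha\phi_p(\mathrm{i}hs)\hat{u}(s).$$
Provided one has $\phi_p(z)=1+\mathcal{O}(|z|^p)$ near the origin, the strengthened smoothness hypothesis $u\in C^{[\alpha]+p+1}(\mathds{R})$ together with $L_1$-integrability of derivatives through order $[\alpha]+p+2$ yields $|\hat{u}(s)|\le c_2(1+|s|)^{-([\alpha]+p+2)}$, and exactly the integral estimate of Theorem~\ref{th:3.1} bounds $\int|\hat{\varphi}_p(h,s)|\,\mathrm{d}s$ by $Ch^p$; the inverse Fourier transform then gives the desired $\mathcal{O}(h^p)$ error.

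The heart of the proof is to verify $\phi_p(z)=1+\mathcal{O}(|z|^p)$ by induction on $p$, the base case $p=2$ being established in the proof of Theorem~\ref{th:3.1}. For the inductive step, write $\widetilde{W}_p(z)=(g_p(1-z))^\alpha$ where $g_p(w)=g_{p-1}(w)+c_p w^p$ with $c_p:=\lambda_{p-1,p-1}^{(\alpha)}/\alpha$. Since $g_{p-1}(w)=w\,(1+\mathcal{O}(w))$ near $w=0$, a binomial expansion gives
$$(g_p(w))^\alpha=(g_{p-1}(w))^\alpha+\alpha c_p\,w^{\alpha+p-1}(1+\mathcal{O}(w))+\mathcal{O}(w^{\alpha+2p-2}).$$
Substituting $w=1-\mathrm{e}^{-z}=z\,(1+\mathcal{O}(z))$ and multiplying by $\mathrm{e}^z/z^\alpha$ then yields
$$\phi_p(z)=\phi_{p-1}(z)+\alpha c_p\,z^{p-1}+\mathcal{O}(|z|^p).$$
By the induction hypothesis together with the defining equation for the $\lambda$'s, $\phi_{p-1}(z)=1-\lambda_{p-1,p-1}^{(\alpha)}z^{p-1}+\mathcal{O}(|z|^p)$, so the prescribed value $c_p=\lambda_{p-1,p-1}^{(\alpha)}/\alpha$ is precisely what is needed to cancel the leading error and produce $\phi_p(z)=1+\mathcal{O}(|z|^p)$.

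The right-derivative estimate follows by the mirror-image argument: the substitution $x\mapsto -x$ exchanges $\,_{RL}D_{-\infty,x}^\alpha$ with $\,_{RL}D_{x,+\infty}^\alpha$ and $\,^{L}\mathcal{B}_p^\alpha$ with $\,^{R}\mathcal{B}_p^\alpha$, so the previous argument applies verbatim to the reflected function (at the level of Fourier variables this amounts to replacing $s$ by $-s$). The main obstacle I anticipate is the inductive bookkeeping: one has to confirm that the prescribed polynomial form of $\widetilde{W}_p$ is compatible, at each step, with interpreting the paper's defining relation as the identity with $W_{k,s}=\widetilde{W}_k$, and to justify that $g_p(w)$ has a simple zero at $w=0$ so that the factorisation $g_{p-1}(w)=w(1+\mathcal{O}(w))$ used in the binomial expansion remains legitimate term by term throughout the induction.
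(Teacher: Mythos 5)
Your proposal is correct and follows exactly the route the paper intends: the paper's own ``proof'' of this theorem is a one-line remark that the argument is the same as for Theorem~\ref{th:3.1}, and your Fourier-transform/symbol argument is precisely that, with the decay estimate on $\hat{u}$ upgraded to match the stronger smoothness hypothesis. The one piece the paper leaves entirely implicit --- that the recursive choice $c_p=\lambda_{p-1,p-1}^{(\alpha)}/\alpha$ cancels the leading term of $\phi_{p-1}$ so that $\phi_p(z)=1+\mathcal{O}(|z|^p)$ --- you supply correctly by induction, and your reading $W_{k,s}=\widetilde{W}_k$ of the paper's defining relation for the $\lambda$'s is the right one.
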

\begin{proof}The proof of this theorem is almost the same as that of Theorem \ref{th:3.1}, so we omit it here.
\end{proof}

Similarly, define the following $p$-th order difference operators
$$
\begin{array}{lll}
\displaystyle
\,^{L}\mathcal{A}_{p}^{\alpha}u(x)
=\frac{1}{h^{\alpha}}\sum\limits_{\ell=0}^{[\frac{x-a}{h}]+1}
\kappa_{p,\ell}^{(\alpha)}u\left(x-(\ell-1)h\right)
\end{array}
$$
and
$$
\begin{array}{lll}
\displaystyle
\,^{R}\mathcal{A}_{p}^{\alpha}u(x)
=\frac{1}{h^{\alpha}}\sum\limits_{\ell=0}^{[\frac{b-x}{h}]+1}
\kappa_{p,\ell}^{(\alpha)}u\left(x+(\ell-1)h\right),
\end{array}
$$
then the $p$-th order numerical differential algorithm for Riesz derivatives in $(a,b)$ is given by
$$
\begin{array}{lll}
\displaystyle \frac{\partial^\alpha
u(x)}{\partial{|x|^\alpha}}= C_\alpha\left(
\,^{L}\mathcal{A}_{p}^{\alpha}u(x)+\,^{R}\mathcal{A}_{p}^{\alpha}u(x)\right)+\mathcal {O}(h^p),\;\;p\geq3.
\end{array}
$$

The cases $p=3$ and $p=4$ are listed in Appendix A for reference.

\section{Application of the 2nd-order scheme}

In this section, we apply the derived 2nd-order scheme to the Riesz space fractional partial differential equation.

We study one-dimensional Riesz spatial fractional advection diffusion equation in the following form,
$$
\begin{array}{lll} \displaystyle
 \frac{\partial{{}u(x,t)}}{\partial{t}}+K
 \frac{\partial u(x,t)}{\partial{x}}
 =K_\alpha\frac{\partial^\alpha
u(x)}{\partial{|x|^\alpha}}
  +f(x,t),\;\;\;a<x<b,\;\;\;0< t\leq T$$,
\end{array}\eqno(17)
$$
with the initial condition
$$
\begin{array}{ll}
u(x,0)=u^{0}(x),\;\;a<x<b,
\end{array}
$$
and the Dirichlet boundary conditions
$$
\begin{array}{ll}
u(a,t)=
u(b,t)=0,\;\;0\leq t\leq T,
\end{array}
$$
where $K \geq 0$ and $K_\alpha>0$ are the advection and diffusion coefficients, respectively.
$f(x,t)$ and $u^{0}(x)$
are suitably smooth functions.

Let $x_j=jh$ $(j=0,1,\cdots,M)$ and $t_k=k\tau$ $(k=0,1,\cdots,N)$,
where $h=\frac{b-a}{M}$ and $\tau=\frac{T}{N}$ are the uniform spatial
and temporal meshsizes, respectively. And $M$, $N$ are two positive
integers. Denote $u_j^k=u(x_j,t_k),\;0\leq k\leq N,\;0\leq j\leq M$,
then the computational domain $ [0,T]\times[a,b]$ is discretized by
$\Omega_{\tau h}=\Omega_\tau\times\Omega_h$, where $\Omega_\tau=\{t_k|\;0\leq k\leq N\}$ and $\Omega_h=\{x_j|\;0\leq j\leq M\}$.
Given any
grid function $\{u^k_j |\; 0 \leq j \leq M,\; 0 \leq k \leq N\}$ on $\Omega_{\tau h}$, denote
$$ u_{j}^{k-\frac{1}{2}}=\frac{1}{2}\left(u^k_j+u^{k-1}_j\right),\;
\delta_t u_{j}^{k-\frac{1}{2}}=\frac{1}{\tau}\left(u^k_j-u^{k-1}_j\right),$$
$$ \delta_x u_{j-\frac{1}{2}}^{k}=\frac{1}{h}\left(u_j^k-u_{j-1}^k\right),\;
\delta_{\bar{x}} u_{j}^{k}=\frac{1}{2}\left(\delta_x u_{j+\frac{1}{2}}^k+\delta_x u_{j-\frac{1}{2}}^k\right),\;
\delta_{x}^2 u_{j}^{k}=\frac{1}{h}\left(\delta_x u_{j+\frac{1}{2}}^k-\delta_x u_{j-\frac{1}{2}}^k\right).
$$

For convenience, let
${V}_h = \{\bm{u}|\; \bm{u}=\{u_j|\;0\leq j\leq M\} $ is a grid functions on $\Omega_h$ and $u_0 = u_M = 0\}.$
Then for any grid function $\bm{u}, \bm{v} \in
{V}_h$, we can define the following inner products
$$(\bm{u},\bm{v})=h\sum_{j=1}^{M-1}u_j v_j,\;\;(\delta_x\bm{u},\delta_x\bm{v})=h\sum_{j=1}^{M}\left(\delta_x u_{j-\frac{1}{2}}\right)\left(\delta_x v_{j-\frac{1}{2}}\right),$$
and the corresponding norms
$$||\bm{u}||=\sqrt{(\bm{u},\bm{u})},\;\;||\delta_x\bm{u}||=\sqrt{(\delta_x\bm{u},\delta_x\bm{u})}.$$

Next, considering equation (17) at the gird points $(x_j,t_{k+\frac{1}{2}})$, one has
$$
\begin{array}{lll} \displaystyle
 \frac{\partial{{}u(x_j,t_{k+\frac{1}{2}})}}{\partial{t}}+K
 \frac{\partial u(x_j,t_{k+\frac{1}{2}})}{\partial{x}}
 =K_\alpha\frac{\partial^\alpha
u(x_j,t_{k+\frac{1}{2}})}{\partial{|x|^\alpha}}
  +f(x_j,t_{k+\frac{1}{2}}).
\end{array}
$$
Substituting (16) into the above equation leads to
$$
\begin{array}{lll} \displaystyle
 \frac{\partial{{}u(x_j,t_{k+\frac{1}{2}})}}{\partial{t}}+K
\delta_{\bar{x}} u(x_j,t_{k+\frac{1}{2}})
 =K_\alpha\delta_x^\alpha
u(x_j,t_{k+\frac{1}{2}})
  +f(x_j,t_{k+\frac{1}{2}})+\mathcal{O}(h^2),
\end{array}
$$
where the operator $\delta_x^\alpha$ is defined by
$\delta_x^\alpha u(x,t)=\displaystyle {C_\alpha} \left(
\,^{L}\mathcal{A}_{2}^{\alpha}+\,^{R}\mathcal{A}_{2}^{\alpha}\right)u(x,t)$.\\
Using Taylor expansion yields
$$
\begin{array}{lll} \displaystyle
 \frac{\partial{{}u(x_j,t_{k+\frac{1}{2}})}}{\partial{t}}=\delta_t u(x_j,t_{k+\frac{1}{2}})+\mathcal{O}(\tau^2).
\end{array}
$$

A combination of the above two equations gives,
$$
\begin{array}{lll} \displaystyle
 \delta_t u(x_j,t_{k+\frac{1}{2}})+K
\delta_{\bar{x}} u(x_j,t_{k+\frac{1}{2}})
 =K_\alpha\delta_x^\alpha
u(x_j,t_{k+\frac{1}{2}})
  +f(x_j,t_{k+\frac{1}{2}})+R_j^k,
\end{array}\eqno(18)
$$
where there exists a positive constant $c_3$ such that
$$
\begin{array}{lll} \displaystyle
|R_j^k|\leq c_3(\tau^2+h^2),\;0\leq k\leq N-1,\;1\leq j\leq M-1.
\end{array}
$$

Omitting the small terms $R_j^k$ in (18), and replacing the grid
function $u(x_j,t_{k+\frac{1}{2}})$ with its numerical approximation $U_j^{k+\frac{1}{2}}$,
we obtain the following finite difference scheme for equation (17),
$$
\begin{array}{lll} \displaystyle
 \delta_t U_j^{k+\frac{1}{2}}+K
\delta_{\bar{x}} U_j^{k+\frac{1}{2}}
 =K_\alpha\delta_x^\alpha
U_j^{k+\frac{1}{2}}
  +f_j^{k+\frac{1}{2}},\vspace{0.2 cm}\\\;k=0,1,\ldots,N-1,j=1,2,\ldots,M-1,
\end{array}\eqno(19)
$$
$$
\begin{array}{ll}
U_j^0=u^{0}(x_j),\;\;j=0,1,\ldots,M,
\end{array}
$$
$$
\begin{array}{ll}
U_0^k=
U_M^k=0,\;\;k=0,1,\ldots,N.
\end{array}
$$

We now prove the solvability, stability,
and convergence of the difference scheme (19).
Firstly, let us list some preliminary results.

\begin{definition}\cite{CJ}\label{def:3.2}
Let $n\times n$ Toeplitz matrix $\mathbf{T}_n$ be in the form:
$$\mathbf{T}_n=
\left(
  \begin{array}{ccccc}
    t_0 & t_{-1} & \cdots & t_{2-n} & t_{1-n} \vspace{0.2 cm}\\
    t_{1} & t_{0} & t_{-1} & \cdots & t_{2-n}\vspace{0.2 cm}\\
    \vdots &t_{1} & t_{0} & \ddots & \vdots \vspace{0.2 cm}\\
   t_{n-2} & \cdots & \ddots & \ddots & t_{-1} \vspace{0.2 cm}\\
   t_{n-1}& t_{M-2}& \cdots &t_{1}& t_{0} \vspace{0.2 cm}\\
  \end{array}
\right),
$$
i.e., $t_{ij}=t_{i-j}$. Assume that the diagonals
are the
Fourier coefficients of function $f$, i.e.,
$$t_k=\frac{1}{2\pi}\int_{-\pi}^{\pi}f(x)\mathrm{e}^{-\mathrm{i}kx}\mathrm{d}x,$$
then function $f(x)$ is called the generating function of
$\mathbf{T}_n$.
\end{definition}

\begin{lemma}\label{le:3.1}(Grenander-Szeg\"{o} Theorem \cite{C}) For the above Toeplitz matrix $\mathbf{T}_n$,
 let $f(x)$ be a $2\pi$-periodic continuous real-valued
function defined on $\left[-\pi, \pi\right]$. Denote
$\lambda_{min}(\mathbf{T}_n)$ and $\lambda_{max}(\mathbf{T}_n)$
as the smallest and largest eigenvalues of $\mathbf{T}_n$,
respectively. Then one has
$$f_{min}\leq \lambda_{min}(\mathbf{T}_n) \leq \lambda_{max}(\mathbf{T}_n) \leq
f_{max},$$ where $f_{min}$, $f_{max}$ are the minimum and maximum
values of $f(x)$ on $\left[-\pi, \pi\right]$. Moreover, if $f_{min} < f_{max}$, then all
eigenvalues of $\mathbf{T}_n$ satisfy $$f_{min}< \lambda(\mathbf{T}_n) < f_{max},
$$ for all $n>0$. And furthermore if $f_{max} \leq
0$, then $\mathbf{T}_n$ is negative semi-definite.
\end{lemma}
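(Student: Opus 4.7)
The plan is to derive the eigenvalue bounds from a Rayleigh-quotient representation of the quadratic form $\mathbf{v}^{*}\mathbf{T}_n\mathbf{v}$. Since $f$ is real-valued, the hypothesis $t_k=\frac{1}{2\pi}\int_{-\pi}^{\pi}f(x)\mathrm{e}^{-\mathrm{i}kx}\mathrm{d}x$ forces $t_{-k}=\overline{t_k}$, so $\mathbf{T}_n$ is Hermitian and its eigenvalues are real. The first step is to establish, for any nonzero vector $\mathbf{v}=(v_1,\ldots,v_n)^{T}$ with associated trigonometric polynomial $p(x)=\sum_{k=1}^{n}v_k\mathrm{e}^{\mathrm{i}kx}$, the two identities
$$
\mathbf{v}^{*}\mathbf{T}_n\mathbf{v}=\frac{1}{2\pi}\int_{-\pi}^{\pi}f(x)|p(x)|^2\mathrm{d}x,\qquad \mathbf{v}^{*}\mathbf{v}=\frac{1}{2\pi}\int_{-\pi}^{\pi}|p(x)|^2\mathrm{d}x,
$$
the first by substituting the Fourier representation of $t_{i-j}$ into $\sum_{i,j}\overline{v_i}\,t_{i-j}\,v_j$ and factoring the double sum, the second by Parseval's identity.

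Next, I would invoke the Courant-Fischer min-max characterization: $\lambda_{\min}(\mathbf{T}_n)$ and $\lambda_{\max}(\mathbf{T}_n)$ are respectively the infimum and supremum over $\mathbf{v}\neq 0$ of the Rayleigh quotient $\mathbf{v}^{*}\mathbf{T}_n\mathbf{v}/\mathbf{v}^{*}\mathbf{v}$. By the identities above, this quotient is a weighted mean of $f$ with nonnegative weight $|p(x)|^2$, hence lies in $[f_{\min},f_{\max}]$. Taking the extremes over $\mathbf{v}$ immediately yields $f_{\min}\leq \lambda_{\min}(\mathbf{T}_n)\leq\lambda_{\max}(\mathbf{T}_n)\leq f_{\max}$. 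Moreover, if $f_{\max}\leq 0$, the integral representation gives $\mathbf{v}^{*}\mathbf{T}_n\mathbf{v}\leq 0$ for every $\mathbf{v}$, so $\mathbf{T}_n$ is negative semi-definite.

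For the strict inequalities when $f_{\min}<f_{\max}$, I would argue by contradiction: suppose $\mathbf{v}^{*}\mathbf{T}_n\mathbf{v}/\mathbf{v}^{*}\mathbf{v}=f_{\min}$ for some $\mathbf{v}\neq 0$. Then $\int_{-\pi}^{\pi}(f(x)-f_{\min})|p(x)|^2\mathrm{d}x=0$; since $f-f_{\min}\geq 0$ is continuous and, by the assumption $f_{\min}<f_{\max}$ together with continuity of $f$, strictly positive on a nonempty open subset of $[-\pi,\pi]$, this forces $p$ to vanish throughout that open set. An analogous argument handles the case $\lambda_{\max}(\mathbf{T}_n)=f_{\max}$.

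The main obstacle, and the step deserving the most care, is the last one: one must know that a nonzero trigonometric polynomial $p(x)=\sum_{k=1}^{n}v_k\mathrm{e}^{\mathrm{i}kx}$ cannot vanish on any subinterval of $[-\pi,\pi]$. This reduces to the algebraic fact that the polynomial $q(z)=\sum_{k=1}^{n}v_k z^k$ in the variable $z=\mathrm{e}^{\mathrm{i}x}$ is a nonzero polynomial of degree at most $n$, and therefore has only finitely many roots on the unit circle. Once this algebraic rigidity is in place, the contradiction above completes the proof of the strict bounds, and the three assertions of the lemma follow together.
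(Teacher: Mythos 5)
The paper does not prove this lemma at all: it is stated as the classical Grenander--Szeg\H{o} theorem and cited to Chan's work, so there is no in-paper argument to compare against. Your proof is the standard one and is correct: the identity $\mathbf{v}^{*}\mathbf{T}_n\mathbf{v}=\frac{1}{2\pi}\int_{-\pi}^{\pi}f(x)|p(x)|^{2}\,\mathrm{d}x$ together with Parseval turns the Rayleigh quotient into a weighted mean of $f$, which gives the two-sided bound and the negative semi-definiteness when $f_{\max}\leq 0$ immediately; and your treatment of the strict inequalities correctly identifies the one genuinely nontrivial point, namely that a nonzero trigonometric polynomial $\sum_{k=1}^{n}v_k e^{\mathrm{i}kx}$ has only finitely many zeros on the circle (via the algebraic polynomial $q(z)=\sum v_k z^k$) and so cannot vanish on the open set where $f-f_{\min}>0$. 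The only implicit step worth making explicit is that the infimum of the Rayleigh quotient is attained (compactness of the unit sphere, or equivalently that $\lambda_{\min}$ is an eigenvalue with an eigenvector realizing the quotient), so that equality $\lambda_{\min}=f_{\min}$ really does produce a concrete $\mathbf{v}$ to feed into the contradiction; with that observation the argument is complete.
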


\begin{theorem}\label{th:3.2}Denote
$$\mathbf{G}_\alpha=
\left(
  \begin{array}{ccccc}
    \kappa_{2,1}^{(\alpha)} & \kappa_{2,0}^{(\alpha)} & 0 & \cdots & 0 \vspace{0.2 cm}\\
    \kappa_{2,2}^{(\alpha)} & \kappa_{2,1}^{(\alpha)} & \kappa_{2,0}^{(\alpha)} & 0& \cdots \vspace{0.2 cm}\\
    \vdots &\vdots & \ddots & \ddots & \ddots \vspace{0.2 cm}\\
  \kappa_{2,M-2}^{(\alpha)} & \kappa_{2,M-3}^{(\alpha)} & \ldots &\kappa_{2,1}^{(\alpha)}& \kappa_{2,0}^{(\alpha)} \vspace{0.2 cm}\\
  \kappa_{2,M-1}^{(\alpha)}& \kappa_{2,M-2}^{(\alpha)}& \ldots & \kappa_{2,2}^{(\alpha)}&  \kappa_{2,1}^{(\alpha)} \vspace{0.2 cm}\\
  \end{array}
\right).
$$
Then matrix $\mathbf{G}=(\mathbf{G}_\alpha+\mathbf{G}_\alpha^T)$ is negative semi-definite.
\end{theorem}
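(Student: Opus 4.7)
The plan is to combine the Toeplitz structure of $\mathbf{G}_\alpha$ with the Grenander-Szeg\"{o} theorem (Lemma \ref{le:3.1}) applied to the real symmetric matrix $\mathbf{G}=\mathbf{G}_\alpha+\mathbf{G}_\alpha^{T}$. A direct inspection gives $(\mathbf{G}_\alpha)_{ij}=\kappa_{2,\,i-j+1}^{(\alpha)}$ (with the convention $\kappa_{2,\ell}^{(\alpha)}=0$ for $\ell<0$), so $\mathbf{G}_\alpha$ is a lower-Hessenberg Toeplitz matrix whose non-zero diagonals $t_k=\kappa_{2,k+1}^{(\alpha)}$ for $k\ge-1$ are exactly the Fourier coefficients of $f_\alpha(x)=e^{-ix}\widetilde{W}_{2}(e^{ix})$. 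Since the entries of $\mathbf{G}_\alpha$ are real, the generating function of $\mathbf{G}_\alpha^{T}$ is $\overline{f_\alpha(x)}$, and therefore $\mathbf{G}$ admits the real, even, $2\pi$-periodic generating function
$$g_\alpha(x)=2\,\mathrm{Re}\!\left[e^{-ix}\widetilde{W}_{2}(e^{ix})\right].$$
By Lemma \ref{le:3.1} it suffices to prove $g_\alpha(x)\le 0$ on $[0,\pi]$ for every $\alpha\in(1,2)$.

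To bring $g_\alpha$ into a workable form, I would exploit the factorization
$$\tfrac{3\alpha-2}{2\alpha}-\tfrac{2(\alpha-1)}{\alpha}z+\tfrac{\alpha-2}{2\alpha}z^{2}=\tfrac{1}{2\alpha}(z-1)\bigl((\alpha-2)z-(3\alpha-2)\bigr)$$
together with $e^{ix}-1=2i\sin(x/2)e^{ix/2}$. A short computation then produces the polar representation
$$\widetilde{W}_{2}(e^{ix})=\left(\tfrac{2\sin(x/2)\,|Q_\alpha(x)|}{\alpha}\right)^{\!\alpha}\exp\!\bigl(i\alpha(x+\psi_\alpha(x)-\pi)\bigr),$$
where $Q_\alpha(x)=2(\alpha-1)\sin(x/2)+i\alpha\cos(x/2)$ and $\psi_\alpha(x)=\arg Q_\alpha(x)\in[0,\pi/2]$ (principal branch). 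Multiplying through by $e^{-ix}$, the sign of $g_\alpha(x)$ on $(0,\pi)$ is governed by that of $\cos\Phi_\alpha(x)$, with $\Phi_\alpha(x):=(\alpha-1)x+\alpha\psi_\alpha(x)-\alpha\pi$.

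The heart of the argument, and the main obstacle, is showing $\cos\Phi_\alpha(x)\le 0$ uniformly on $(0,\pi)$ and $\alpha\in(1,2)$. I would handle this by a monotonicity argument. Differentiating $\tan\psi_\alpha=\tfrac{\alpha}{2(\alpha-1)}\cot(x/2)$ gives $\psi_\alpha'(x)=-\alpha(\alpha-1)/|Q_\alpha(x)|^{2}$, and consequently
$$\Phi_\alpha'(x)=(\alpha-1)\!\left(1-\tfrac{\alpha^{2}}{|Q_\alpha(x)|^{2}}\right).$$
A quick expansion gives $|Q_\alpha(x)|^{2}=\alpha^{2}+(\alpha-2)(3\alpha-2)\sin^{2}(x/2)$, which is strictly less than $\alpha^{2}$ on $(0,\pi)$ since $(\alpha-2)(3\alpha-2)<0$ for $\alpha\in(1,2)$; hence $\Phi_\alpha'(x)<0$ there. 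Combined with the endpoint values $\Phi_\alpha(0^{+})=-\alpha\pi/2\in(-\pi,-\pi/2)$ and $\Phi_\alpha(\pi)=-\pi$, this confines $\Phi_\alpha(x)$ to the subinterval $[-\pi,-\alpha\pi/2]\subset[-\pi,-\pi/2]$, on which cosine is non-positive. Using evenness of $g_\alpha$ then yields $g_\alpha(x)\le 0$ on $[-\pi,\pi]$, and Lemma \ref{le:3.1} delivers the claimed negative semi-definiteness of $\mathbf{G}$.
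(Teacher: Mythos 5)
Your proposal is correct and follows essentially the same route as the paper's proof: identify the generating function of the symmetrized Toeplitz matrix, write it in polar form via the factorization of the quadratic symbol, show the phase is monotone with range contained in $[-\pi,-\alpha\pi/2]$ so its cosine is non-positive, and invoke the Grenander--Szeg\H{o} lemma. The only differences are cosmetic (your phase $\Phi_\alpha$ is the paper's $Z(\alpha,x)$ in a slightly different parametrization), and your explicit check that $\Phi_\alpha(\pi)=-\pi$ actually makes the final step $\cos\Phi_\alpha\le 0$ a bit more airtight than the paper's appeal to the maximum of $Z$ alone.
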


\begin{proof}
According to Definition \ref{def:3.2} we know that the generating functions of the matrices
$\mathbf{G}_\alpha$ and $\mathbf{G}_\alpha^T$ are
$$
\begin{array}{lll}
\displaystyle f_{\mathbf{G}_\alpha}(x)=\sum_{\ell=0}^{\infty}\kappa_{2,\ell}^{(\alpha)}\mathrm{e}^{\mathrm{i}(\ell-1)x},\;
f_{\mathbf{G}_\alpha^T}(x)=\sum_{\ell=0}^{\infty}\kappa_{2,\ell}^{(\alpha)}\mathrm{e}^{-\mathrm{i}(\ell-1)x},
\end{array}
$$
respectively. Accordingly, the generating function of matrix $\mathbf{G}$ is
$f(\alpha,x)=f_{\mathbf{G}_\alpha}(x)+f_{\mathbf{G}_\alpha^T}(x)$, which is a periodic continuous real-valued function on
$[-\pi,\pi]$.

Application of equation (10) leads to
$$
\begin{array}{lll}
\displaystyle f(\alpha,x)&=&\displaystyle \mathrm{e}^{-\mathrm{i} x}\left(\frac{3\alpha-2}{2\alpha}-\frac{2(\alpha-1)}{\alpha}\mathrm{e}^{\mathrm{i} x}+\frac{\alpha-2}{2\alpha}\mathrm{e}^{2\mathrm{i} x}\right)^{\alpha}\vspace{0.2 cm}\\&&\displaystyle+
\mathrm{e}^{\mathrm{i} x}\left(\frac{3\alpha-2}{2\alpha}-\frac{2(\alpha-1)}{\alpha}\mathrm{e}^{-\mathrm{i} x}+\frac{\alpha-2}{2\alpha}\mathrm{e}^{-2\mathrm{i} x}\right)^{\alpha}\vspace{0.2 cm}\\&=&\displaystyle
\left(\frac{3\alpha-2}{2\alpha}\right)^{\alpha}\left[\mathrm{e}^{-\mathrm{i} x}\left(1-\mathrm{e}^{\mathrm{i} x}\right)^{\alpha}\left(1-\frac{\alpha-2}{3\alpha-2}\mathrm{e}^{\mathrm{i} x}\right)^{\alpha}\right.
\vspace{0.2 cm}\\&&\displaystyle\left.+\mathrm{e}^{\mathrm{i} x}\left(1-\mathrm{e}^{-\mathrm{i} x}\right)^{\alpha}\left(1-\frac{\alpha-2}{3\alpha-2}\mathrm{e}^{-\mathrm{i} x}\right)^{\alpha}
\right].
\end{array}
$$

Since $f(\alpha,x)$ is an even function, we only need consider its principal value on $[0,\pi]$. Using the following formulas

$$
\begin{array}{lll}
\displaystyle \left(1-\mathrm{e}^{\pm
\mathrm{i}x}\right)^\alpha=\left(2\sin\frac{x}{2}\right)^\alpha \mathrm{e}^{\pm
\mathrm{i}\alpha\left(\frac{\theta-\pi}{2}\right)}
\end{array}
$$
and
$$
\begin{array}{lll}
\displaystyle \left(a-b\mathrm{i}\right)^\alpha=\left(a^2+b^2\right)
^{\frac{\alpha}{2}}\mathrm{e}^{ \mathrm{i}\alpha\theta},\;\;\theta=-\arctan\frac{b}{a},
\end{array}
$$
then one gets
$$
\begin{array}{lll}
\displaystyle f(\alpha,x)=\displaystyle
\left(2\sin\frac{x}{2}\right)^\alpha\left(\frac{3\alpha-2}{2\alpha}\right)^{\alpha}
\left[\left(1-\frac{\alpha-2}{3\alpha-2}\cos x\right)^2+\left(\frac{\alpha-2}{3\alpha-2}\sin x\right)^2
\right]^{\frac{\alpha}{2}}Q(\alpha,x),
\end{array}
$$
where
$$
\begin{array}{lll}
\displaystyle Q(\alpha,x)=2\cos\left(\alpha\left(\theta+\frac{x-\pi}{2}\right)-x\right),
\end{array}
$$
and
$$
\begin{array}{lll}
\displaystyle \theta=-\arctan\frac{(\alpha-2)\sin x}{(3\alpha-2)-(\alpha-2)\cos x},\;\alpha\in(1,2),\;x\in[0,\pi].
\end{array}
$$

Let
$$
\begin{array}{lll}
\displaystyle Z(\alpha,x)=\alpha\left(\theta+\frac{x-\pi}{2}\right)-x,\;\alpha\in(1,2),\;x\in[0,\pi].
\end{array}
$$
Then
$$
\begin{array}{lll}
\displaystyle Z_x(\alpha,x)=\alpha\frac{ \partial\theta}{\partial x}+\frac{\alpha}{2} -1=
\frac{2(1-\alpha)(2-\alpha)(3\alpha-2)\sin^2\left(\frac{x}{2}\right)}{4(\alpha-1)^2+(2-\alpha)(3\alpha-2)\cos^2\left(\frac{x}{2}\right)}\leq0,
\end{array}
$$
that is to say that $Z_x(\alpha,x)$ is an monotonically nonincreasing function with respect to $x$, so
$$
\begin{array}{lll}
\displaystyle Z_{\max}(\alpha,x)=Z(\alpha,0)=-\frac{\pi}{2}\alpha,
\end{array}
$$
and
$$
\begin{array}{lll}
\displaystyle Q_{\max}(\alpha,x)=2\cos\left(Z_{\max}(\alpha,x)\right)=2\cos\left(\frac{\pi}{2}\alpha\right)<0,\;\alpha\in(1,2).
\end{array}
$$

Hence, we know that $f(\alpha,x)\leq0$ and matrix $\mathbf{G}=(\mathbf{G}_\alpha+\mathbf{G}_\alpha^T)$
 is negative semi-definite for $\alpha\in(1,2)$ by
Lemma \ref{le:3.1}.
\end{proof}

\begin{theorem}For any $\bm{v}\in V_h$, the following inequality holds
$$
\begin{array}{lll}
\displaystyle (\delta_x^{\alpha}\bm{v},\bm{v})\leq0\;\; for\; \alpha\in(1,2).
\end{array}
$$
\end{theorem}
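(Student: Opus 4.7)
The plan is to recast the bilinear form $(\delta_x^{\alpha}\bm v,\bm v)$ as a matrix quadratic form involving the Toeplitz matrix $\mathbf{G}=\mathbf{G}_\alpha+\mathbf{G}_\alpha^T$ constructed in Theorem~\ref{th:3.2}, and then invoke the negative semi-definiteness already proved there together with the sign of the constant $C_\alpha$.

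First I would write the discrete operators in matrix form on $V_h$. Since $v_0=v_M=0$, only $v_1,\ldots,v_{M-1}$ contribute. For $j\in\{1,\ldots,M-1\}$, reindexing $k=j-\ell+1$ in
$$
\,^{L}\mathcal{A}_{2}^{\alpha}v(x_j)=\frac{1}{h^\alpha}\sum_{\ell=0}^{j+1}\kappa_{2,\ell}^{(\alpha)}v_{j-\ell+1}
$$
gives entries $\frac{1}{h^\alpha}\kappa_{2,j-k+1}^{(\alpha)}$ for $k\le j+1$, with the boundary zeros $v_0,v_M$ exactly cancelling the overhanging indices $\ell=j+1$ and (when $j=M-1$) $\ell=0$. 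This is precisely the matrix $\frac{1}{h^\alpha}\mathbf{G}_\alpha$ defined in Theorem~\ref{th:3.2}. By the symmetric computation with the shift $+(\ell-1)h$, the right operator $\,^{R}\mathcal{A}_{2}^{\alpha}$ corresponds to $\frac{1}{h^\alpha}\mathbf{G}_\alpha^T$. Hence, on $V_h$,
$$
\delta_x^{\alpha}\bm v=\frac{C_\alpha}{h^\alpha}(\mathbf{G}_\alpha+\mathbf{G}_\alpha^T)\bm v=\frac{C_\alpha}{h^\alpha}\mathbf{G}\bm v.
$$

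Then I would compute
$$
(\delta_x^{\alpha}\bm v,\bm v)=h\sum_{j=1}^{M-1}(\delta_x^{\alpha}\bm v)_j\,v_j=\frac{C_\alpha}{h^{\alpha-1}}\bm v^T\mathbf{G}\bm v.
$$
Because $C_\alpha=-\frac{1}{2\cos(\pi\alpha/2)}>0$ for $\alpha\in(1,2)$ (since $\pi\alpha/2\in(\pi/2,\pi)$ makes the cosine negative), and Theorem~\ref{th:3.2} gives $\bm v^T\mathbf{G}\bm v\le 0$, the conclusion $(\delta_x^{\alpha}\bm v,\bm v)\le 0$ follows immediately.

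The only delicate point I foresee is the bookkeeping in the first paragraph: one must check carefully that the truncation ranges $\bigl[\tfrac{x_j-a}{h}\bigr]+1$ and $\bigl[\tfrac{b-x_j}{h}\bigr]+1$ in the definitions of $\,^{L}\mathcal{A}_{2}^{\alpha}$ and $\,^{R}\mathcal{A}_{2}^{\alpha}$, together with the homogeneous boundary values $v_0=v_M=0$, produce exactly the triangular/banded Toeplitz structure of $\mathbf{G}_\alpha$ and $\mathbf{G}_\alpha^T$ with no spurious contributions. Once this identification is made, the result is a direct consequence of Theorem~\ref{th:3.2} and the positivity of $C_\alpha$.
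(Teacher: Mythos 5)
Your proposal is correct and follows essentially the same route as the paper: both reduce $(\delta_x^{\alpha}\bm v,\bm v)$ to the quadratic form $C_\alpha h^{1-\alpha}\,\bm v^{T}\mathbf{G}\bm v$ and conclude from the negative semi-definiteness of $\mathbf{G}=\mathbf{G}_\alpha+\mathbf{G}_\alpha^{T}$ established in the preceding theorem together with $C_\alpha>0$ for $\alpha\in(1,2)$. Your version is in fact slightly more careful than the paper's, which elides the $h^{-\alpha}$ factor and the explicit check of the boundary-index bookkeeping and the sign of $C_\alpha$.
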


\begin{proof} One can easily check that
$$
\begin{array}{lll}
\displaystyle (\delta_x^{\alpha}\bm{v},\bm{v})= C_\alpha \left(\left(
\,^{L}\mathcal{A}_{2}^{\alpha}+\,^{R}\mathcal{A}_{2}^{\alpha}\right)\bm{v},\bm{v}\right)
=C_\alpha h\bm{v}^{T}\left(\mathbf{G}_\alpha+\mathbf{G}_\alpha^T\right)\bm{v}=
C_\alpha h\bm{v}^{T}\mathbf{G}\bm{v},
\end{array}
$$
which implies that $(\delta_x^{\alpha}\bm{v},\bm{v})\leq0$ by Theorem \ref{th:3.2}.
\end{proof}

\begin{theorem}Finite difference scheme (19) is uniquely solvable for $1<\alpha<2$.
\end{theorem}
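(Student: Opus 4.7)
The plan is to recast the one-step update of scheme (19) as a linear system $A \bm{U}^{k+1} = \bm{b}^k$ in the space $V_h$, where $\bm{b}^k$ collects all known quantities from level $k$ and the source term, and then to show $A$ is invertible by verifying that the homogeneous equation $A \bm{v} = \bm{0}$ admits only the trivial solution. Concretely, collecting the $U^{k+1}$ terms on one side of (19) gives an operator of the form
$$
A \bm{v} \;=\; \bm{v} + \tfrac{\tau K}{2}\,\delta_{\bar{x}}\bm{v} - \tfrac{\tau K_\alpha}{2}\,\delta_x^{\alpha}\bm{v},
$$
so unique solvability reduces to showing that $A\bm{v}=\bm{0}$ in $V_h$ forces $\bm{v}=\bm{0}$.

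The main tool will be the discrete energy method: take the inner product $(A\bm{v},\bm{v})$ on $V_h$. Because the homogeneous boundary condition $v_0=v_M=0$ is built into $V_h$, a routine summation-by-parts argument (essentially, reindexing the sum defining $(\delta_{\bar{x}}\bm{v},\bm{v})$ and using $v_0=v_M=0$) yields the discrete antisymmetry $(\delta_{\bar{x}}\bm{v},\bm{v})=0$, so the advection term contributes nothing. For the Riesz term I will invoke the preceding theorem, which gives $(\delta_x^{\alpha}\bm{v},\bm{v})\le 0$ for $\alpha\in(1,2)$. Combining these,
$$
(A\bm{v},\bm{v}) \;=\; \|\bm{v}\|^2 + \tfrac{\tau K}{2}\,(\delta_{\bar{x}}\bm{v},\bm{v}) - \tfrac{\tau K_\alpha}{2}\,(\delta_x^{\alpha}\bm{v},\bm{v}) \;\ge\; \|\bm{v}\|^2,
$$
since $K_\alpha>0$. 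Hence $A\bm{v}=\bm{0}$ implies $\|\bm{v}\|=0$, i.e.\ $\bm{v}=\bm{0}$, so $A$ is a nonsingular linear operator on the finite-dimensional space $V_h$, proving unique solvability at every time level.

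The only real subtlety is the discrete antisymmetry $(\delta_{\bar{x}}\bm{v},\bm{v})=0$; once that is in hand (which is essentially a textbook calculation using $v_0=v_M=0$ to cancel boundary terms in the rearrangement of $h\sum_{j=1}^{M-1} v_j (v_{j+1}-v_{j-1})/(2h)$), the rest of the argument is a one-line application of Theorem~\ref{th:3.2} and the positivity of $K_\alpha$. No CFL-type restriction on $\tau$ or $h$ enters, which is consistent with the paper's claim that the method is unconditionally stable.
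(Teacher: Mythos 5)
Your proposal is correct and follows essentially the same route as the paper: the paper likewise reduces to the homogeneous one-step equation, takes the inner product with $\bm{U}^{k+1}$, and uses $(\delta_{\bar{x}}\bm{v},\bm{v})=0$ together with $(\delta_x^{\alpha}\bm{v},\bm{v})\le 0$ to force $\|\bm{U}^{k+1}\|=0$. The only cosmetic difference is that you phrase the conclusion as nonsingularity of the operator $A$ on the finite-dimensional space $V_h$, while the paper wraps the same computation in an induction over the time levels.
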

\begin{proof} Here we use induction method to show it.
From (19), it is obviously that the result holds for $k=0$.

Now suppose that $U_j^k$ has been determined by equation (19) for $1\leq k\leq N-1$, i.e.,
$$
\begin{array}{lll} \displaystyle
 \delta_t U_j^{k+\frac{1}{2}}+K
\delta_{\bar{x}} U_j^{k+\frac{1}{2}}
 =K_\alpha\delta_x^\alpha
U_j^{k+\frac{1}{2}}
  +f_j^{k+\frac{1}{2}},
\end{array}
$$
which can be rewritten as
$$
\begin{array}{lll} \displaystyle
 2U_j^{k+1}+\tau K
\delta_{\bar{x}} U_j^{k+1}
-\tau K_\alpha\delta_x^\alpha
U_j^{k+1}
= 2U_j^{k}-\tau K
\delta_{\bar{x}} U_j^{k}
+\tau K_\alpha\delta_x^\alpha
U_j^{k}
  +2\tau f_j^{k+\frac{1}{2}}.
\end{array}
$$

Considering the homogeneous form of the above equation and taking the inner product with ${\bm{U}}^{k+1}$ yield
$$
\begin{array}{lll} \displaystyle
2({\bm{U}}^{k+1},{\bm{U}}^{k+1})+\tau K(
\delta_{\bar{x}} {\bm{U}}^{k+1},{\bm{U}}^{k+1})
-\tau K_\alpha(\delta_x^\alpha
{\bm{U}}^{k+1},{\bm{U}}^{k+1})
=0.
\end{array}
$$
Because
$$
\begin{array}{lll} \displaystyle
\left(\delta_{\bar{x}} {\bm{U}}^{k+1},{\bm{U}}^{k+1}\right)
=h\sum_{j=1}^{M-1}\left(\delta_{\bar{x}} {{U}_j}^{k+1}\right){{U}_j}^{k+1}
=\frac{1}{2}\sum_{j=1}^{M-1}\left({U}_{j+1}^{k+1}-{U}_{j-1}^{k+1}\right){U}_j^{k+1}
=0,
\end{array}
$$
and
$$
\begin{array}{lll} \displaystyle
 \left(\delta_x^\alpha
{\bm{U}}^{k+1},{\bm{U}}^{k+1}\right)\leq0,
\end{array}
$$
 we have
$$
\begin{array}{lll} \displaystyle
({\bm{U}}^{k+1},{\bm{U}}^{k+1})\leq0.
\end{array}
$$
So, $||{\bm{U}}^{k+1}||=0$ and $U_j^{k+1}$ can be solved uniquely.
\end{proof}

\begin{theorem}Finite difference scheme (19) is unconditionally stable with respect to the initial values for $1<\alpha<2$.
\end{theorem}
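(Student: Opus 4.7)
The plan is to run the standard discrete energy argument on the homogeneous analogue of scheme (19), that is, with $f_j^{k+1/2}\equiv 0$, since the statement is concerned with stability in the initial data only. Taking the inner product of (19) with $2\tau\,\bm{U}^{k+\frac{1}{2}}\in V_h$ gives
\begin{equation*}
2\tau(\delta_t\bm{U}^{k+\frac{1}{2}},\bm{U}^{k+\frac{1}{2}})
+2\tau K(\delta_{\bar{x}}\bm{U}^{k+\frac{1}{2}},\bm{U}^{k+\frac{1}{2}})
=2\tau K_\alpha(\delta_x^{\alpha}\bm{U}^{k+\frac{1}{2}},\bm{U}^{k+\frac{1}{2}}).
\end{equation*}
The time-difference term on the left equals $\|\bm{U}^{k+1}\|^{2}-\|\bm{U}^{k}\|^{2}$, by the usual identity $2\tau(\delta_t v^{k+\frac12},v^{k+\frac12})=\|v^{k+1}\|^2-\|v^k\|^2$. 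The right-hand side is non-positive by the previous theorem, since $K_\alpha>0$ and $(\delta_x^{\alpha}\bm{v},\bm{v})\le 0$ for all $\bm{v}\in V_h$ when $\alpha\in(1,2)$.

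Next I would show that the advection inner product vanishes. Using the boundary values $U_0^{k}=U_M^{k}=0$, a discrete integration by parts (or the same telescoping computation already used in the solvability proof) yields
\begin{equation*}
(\delta_{\bar{x}}\bm{U}^{k+\frac12},\bm{U}^{k+\frac12})
=\frac{1}{2}\sum_{j=1}^{M-1}\bigl(U_{j+1}^{k+\frac12}-U_{j-1}^{k+\frac12}\bigr)U_j^{k+\frac12}=0.
\end{equation*}
This is the same algebraic identity as in the solvability proof, and it holds for arbitrary $K\ge 0$.

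Combining these facts gives the one-step bound $\|\bm{U}^{k+1}\|^{2}\le\|\bm{U}^{k}\|^{2}$ for every $k=0,1,\dots,N-1$. Iterating from $k=0$ then yields $\|\bm{U}^{k}\|\le\|\bm{U}^{0}\|=\|\bm{u}^{0}\|$ for all $k$, which is precisely unconditional $L^2$-stability with respect to the initial value, independent of the mesh parameters $\tau$ and $h$.

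The only genuinely nontrivial ingredient is the non-positivity $(\delta_x^{\alpha}\bm{v},\bm{v})\le 0$, which has already been proved via the Grenander–Szeg\"o theorem and the sign analysis of the generating function $f(\alpha,x)$; everything else is a routine Crank–Nicolson energy argument, so I expect no further obstacle. One small care point is that the advection coefficient $K$ is assumed non-negative but the identity $(\delta_{\bar{x}}\bm{v},\bm{v})=0$ does not require any sign condition on $K$, so the result covers the full range stated in the model equation.
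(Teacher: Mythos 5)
Your proof is correct and follows essentially the same route as the paper: the paper perturbs the initial data and applies the identical Crank--Nicolson energy estimate to the difference $\bm{\xi}=\bm{U}-\bm{v}$, which by linearity satisfies exactly the homogeneous scheme you analyze directly. The three key ingredients --- the identity $2\tau(\delta_t\bm{v}^{k+\frac12},\bm{v}^{k+\frac12})=\|\bm{v}^{k+1}\|^2-\|\bm{v}^k\|^2$, the vanishing of the advection term under the homogeneous boundary conditions, and the non-positivity $(\delta_x^{\alpha}\bm{v},\bm{v})\le 0$ from the Grenander--Szeg\H{o} argument --- are used in the same way in both proofs.
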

\begin{proof}
Suppose that $v_j^k$ is the solution of the following difference equation,
$$
\begin{array}{lll} \displaystyle
 \delta_t v_j^{k+\frac{1}{2}}+K
\delta_{\bar{x}} v_j^{k+\frac{1}{2}}
 =K_\alpha\delta_x^\alpha
v_j^{k+\frac{1}{2}}
  +f_j^{k+\frac{1}{2}},\vspace{0.2 cm}\\\;k=0,1,\ldots,N-1,j=1,2,\ldots,M-1,
\end{array}\eqno(20)
$$
$$
\begin{array}{ll}
v_j^0=u^{0}(x_j)+\rho_{_j},\;\;j=0,1,\ldots,M,
\end{array}
$$
$$
\begin{array}{ll}
v_0^k=
v_M^k=0,\;\;k=0,1,\ldots,N.
\end{array}
$$
Let $\xi_j^k=U_j^k-v_j^k$, then from equations (19) and (20) one has
$$
\begin{array}{lll} \displaystyle
 \delta_t \xi_j^{k+\frac{1}{2}}+K
\delta_{\bar{x}} \xi_j^{k+\frac{1}{2}}
 =K_\alpha\delta_x^\alpha
\xi_j^{k+\frac{1}{2}},\vspace{0.2 cm}\\\;k=0,1,\ldots,N-1,j=1,2,\ldots,M-1,
\end{array}\eqno(21)
$$
$$
\begin{array}{ll}
\xi_j^0= \rho_{_j},\;\;j=0,1,\ldots,M,
\end{array}
$$
$$
\begin{array}{ll}
\xi_0^k=\xi_M^k=0,\;\;k=0,1,\ldots,N.
\end{array}
$$
Denote
$$
\begin{array}{lll}
{\bm{\xi}}=(\xi_1,\ldots,\xi_{M-1}),\;{\bm{\rho}}=(\rho_1,\ldots,\rho_{M-1}).
\end{array}
$$
Taking the inner product of (21) with ${\bm{\xi}}^{k+\frac{1}{2}}$ yields
$$
\begin{array}{lll} \displaystyle
 \left(\delta_t {\bm{\xi}}^{k+\frac{1}{2}},{\bm{\xi}}^{k+\frac{1}{2}}\right)+K
\left(\delta_{\bar{x}} {\bm{\xi}}^{k+\frac{1}{2}},{\bm{\xi}}^{k+\frac{1}{2}}\right)
 =K_\alpha\left(\delta_x^\alpha
{\bm{\xi}}^{k+\frac{1}{2}},{\bm{\xi}}^{k+\frac{1}{2}}\right).
\end{array}
$$

Note that
$$
\begin{array}{lll} \displaystyle
 \left(\delta_t {\bm{\xi}}^{k+\frac{1}{2}},{\bm{\xi}}^{k+\frac{1}{2}}\right)
 =\frac{1}{2\tau}\left({\bm{\xi}}^{k+1}-{\bm{\xi}}^{k},{\bm{\xi}}^{k+1}+{\bm{\xi}}^{k}\right)
 =\frac{1}{2\tau}\left(||{\bm{\xi}}^{k+1}||^2-||{\bm{\xi}}^{k}||^2\right),
\end{array}
$$
$$
\begin{array}{lll} \displaystyle
\left(\delta_{\bar{x}} {\bm{\xi}}^{k+\frac{1}{2}},{\bm{\xi}}^{k+\frac{1}{2}}\right)
=h\sum_{j=1}^{M-1}\left(\delta_{\bar{x}}{\xi}_j^{k+\frac{1}{2}}\right){\xi}_j^{k+\frac{1}{2}}
=\frac{1}{2}\sum_{j=1}^{M-1}\left({\xi}_{j+1}^{k+\frac{1}{2}}-{\xi}_{j-1}^{k+\frac{1}{2}}\right){\xi}_j^{k+\frac{1}{2}}
=0,
\end{array}
$$
and
$$
\begin{array}{lll} \displaystyle
 \left(\delta_x^\alpha
{\bm{\xi}}^{k+\frac{1}{2}},{\bm{\xi}}^{k+\frac{1}{2}}\right)\leq0,
\end{array}
$$
then we have
$$
\begin{array}{lll} \displaystyle
||{\bm{\xi}}^{k+1}||^2-||{\bm{\xi}}^{k}||^2\leq0,
\end{array}
$$
i.e.,
$$
\begin{array}{lll} \displaystyle
||{\bm{\xi}}^{k+1}||\leq||{\bm{\xi}}^{0}||=||{\bm{\rho}}||,
\end{array}
$$
that is to say that finite difference scheme (19) is unconditionally
 stable with respect to the initial values. All this finishes the proof.
\end{proof}

\begin{theorem}Finite difference scheme (19) is convergent with order $\mathcal{O}(\tau^2+h^2)$.
\end{theorem}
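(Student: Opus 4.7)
The plan is to follow the standard energy method, using the stability machinery already set up. Define the pointwise error $e_j^k = u(x_j,t_k) - U_j^k$ for $0\le k\le N$, $0\le j\le M$, so that $e_j^0=0$ and $e_0^k=e_M^k=0$. Subtracting the scheme (19) from the consistency identity (18) gives the error equation
\begin{equation*}
\delta_t e_j^{k+\frac{1}{2}} + K\,\delta_{\bar x} e_j^{k+\frac{1}{2}} - K_\alpha\,\delta_x^\alpha e_j^{k+\frac{1}{2}} = R_j^k,
\end{equation*}
with $|R_j^k|\le c_3(\tau^2+h^2)$, and $\bm{e}^k\in V_h$ for every $k$.

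Next I would mimic the stability proof: take the discrete $L^2$ inner product of the error equation with $\bm{e}^{k+\frac{1}{2}}=\tfrac{1}{2}(\bm{e}^{k+1}+\bm{e}^k)$. The temporal term telescopes as $(\delta_t\bm{e}^{k+\frac{1}{2}},\bm{e}^{k+\frac{1}{2}})=\frac{1}{2\tau}(\|\bm{e}^{k+1}\|^2-\|\bm{e}^k\|^2)$, the convection term vanishes by antisymmetry of $\delta_{\bar x}$ on $V_h$ exactly as in the stability proof, and the Riesz term satisfies $-K_\alpha(\delta_x^\alpha \bm{e}^{k+\frac{1}{2}},\bm{e}^{k+\frac{1}{2}})\ge 0$ by the inner-product theorem together with Theorem \ref{th:3.2}. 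Dropping this nonnegative term and estimating the right-hand side by Cauchy--Schwarz and Young's inequality,
\begin{equation*}
\bigl|(\bm{R}^k,\bm{e}^{k+\frac{1}{2}})\bigr| \le \tfrac{1}{2}\|\bm{R}^k\|^2+\tfrac{1}{4}\bigl(\|\bm{e}^{k+1}\|^2+\|\bm{e}^k\|^2\bigr),
\end{equation*}
yields an inequality of the form $\|\bm{e}^{k+1}\|^2-\|\bm{e}^k\|^2\le \tau\|\bm{R}^k\|^2+\tfrac{\tau}{2}(\|\bm{e}^{k+1}\|^2+\|\bm{e}^k\|^2)$.

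Since $\|\bm{R}^k\|\le c_3\sqrt{b-a}\,(\tau^2+h^2)$ uniformly in $k$, summing from $k=0$ to $n-1$ and invoking the discrete Gronwall inequality (valid for $\tau$ sufficiently small, e.g.\ $\tau\le 1$) gives $\|\bm{e}^n\|^2\le C\,T\,(\tau^2+h^2)^2$ for all $n\tau\le T$, which produces the stated $\mathcal{O}(\tau^2+h^2)$ estimate in the discrete $L^2$ norm.

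The only genuinely delicate point is verifying that the convection discretization $\delta_{\bar x}$ still yields $(\delta_{\bar x}\bm{e}^{k+\frac{1}{2}},\bm{e}^{k+\frac{1}{2}})=0$ with the zero Dirichlet values in $V_h$; this is the same summation-by-parts identity used in the stability theorem and requires only the boundary conditions $e_0^k=e_M^k=0$. Everything else is routine energy analysis, so I do not anticipate any real obstacle beyond bookkeeping of the constants in the Gronwall step.
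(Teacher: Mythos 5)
Your proposal is correct and follows the same energy-method skeleton as the paper: identical error equation, the same inner product with $\bm{\varepsilon}^{k+\frac{1}{2}}$, the same telescoping identity for the temporal term, the same summation-by-parts cancellation of the convection term, and the same sign argument for the Riesz term via the negative semi-definiteness of $\mathbf{G}$. The only genuine divergence is in how the residual term is absorbed. The paper bounds $(\bm{R}^k,\bm{\varepsilon}^{k+\frac{1}{2}})\le\|\bm{R}^k\|\,\|\bm{\varepsilon}^{k+\frac{1}{2}}\|$ and then, using $\|\bm{\varepsilon}^{k}+\bm{\varepsilon}^{k+1}\|\le\|\bm{\varepsilon}^{k}\|+\|\bm{\varepsilon}^{k+1}\|$, cancels the common factor $\|\bm{\varepsilon}^{k+1}\|+\|\bm{\varepsilon}^{k}\|$ against $\|\bm{\varepsilon}^{k+1}\|^2-\|\bm{\varepsilon}^{k}\|^2$ to obtain the linear recursion $\|\bm{\varepsilon}^{k+1}\|\le\|\bm{\varepsilon}^{k}\|+\tau\|\bm{R}^{k}\|$, which telescopes directly to $\|\bm{\varepsilon}^{k}\|\le c_3T\sqrt{b-a}\,(\tau^2+h^2)$ with no Gronwall argument and no restriction on $\tau$. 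You instead apply Young's inequality and close with a discrete Gronwall lemma; this is valid (your quadratic recursion and the resulting bound check out), but it costs you an exponential factor $e^{CT}$ in the constant and an inessential smallness condition on $\tau$, both of which the paper's cancellation trick avoids. Your route is the more robust one if lower-order terms were present that could not be made to vanish or have a sign; here they all do, so the paper's version is strictly cleaner. One small bookkeeping point: when you drop the term $-K_\alpha(\delta_x^\alpha\bm{e}^{k+\frac{1}{2}},\bm{e}^{k+\frac{1}{2}})\ge 0$, the relevant fact is the inner-product theorem $(\delta_x^\alpha\bm{v},\bm{v})\le 0$, which rests on the Toeplitz matrix theorem together with the Grenander--Szeg\H{o} lemma; your citation of the theorem numbered as the expansion result is presumably an artifact of the paper's duplicated label rather than an error in your argument.
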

\begin{proof}
Suppose that $u(x_j,t_k)$ be the exact solution of equation (17) and $U_j^k$
be the solution of difference equation (19). Let $\varepsilon_j^k=u(x_j,t_k)-U_j^k$,
 then from equations (17) and (19), one gets
$$
\begin{array}{lll} \displaystyle
 \delta_t \varepsilon_j^{k+\frac{1}{2}}+K
\delta_{\bar{x}} \varepsilon_j^{k+\frac{1}{2}}
 =K_\alpha\delta_x^\alpha
\varepsilon_j^{k+\frac{1}{2}}
  +R_j^{k},\vspace{0.2 cm}\\\;k=0,1,\ldots,N-1,j=1,2,\ldots,M-1,
\end{array}\eqno(22)
$$
$$
\begin{array}{ll}
\varepsilon_j^0=0,\;\;j=0,1,\ldots,M,
\end{array}
$$
$$
\begin{array}{ll}
\varepsilon_0^k=\varepsilon_M^k=0,\;\;k=0,1,\ldots,N.
\end{array}
$$

Set
$$
\begin{array}{lll}
{\bm{\varepsilon}}=(\varepsilon_1,\ldots,\varepsilon_{M-1}),\;{\bm{R}}=(R_1,\ldots,R_{M-1}).
\end{array}
$$
Taking the inner product of (22) with $\bm\varepsilon^{k+\frac{1}{2}}$ leads to
$$
\begin{array}{lll} \displaystyle
 \left(\delta_t {\bm{\varepsilon}}^{k+\frac{1}{2}},{\bm{\varepsilon}}^{k+\frac{1}{2}}\right)+K
\left(\delta_{\bar{x}} {\bm{\varepsilon}}^{k+\frac{1}{2}},{\bm{\varepsilon}}^{k+\frac{1}{2}}\right)
 =K_\alpha\left(\delta_x^\alpha
{\bm{\varepsilon}}^{k+\frac{1}{2}},{\bm{\varepsilon}}^{k+\frac{1}{2}}\right)+
\left(
{\bm{R}}^{k},{\bm{\varepsilon}}^{k+\frac{1}{2}}\right).
\end{array}\eqno(23)
$$

Since
$$
\begin{array}{lll} \displaystyle
\left(
{\bm{R}}^{k},{\bm{\varepsilon}}^{k+\frac{1}{2}}\right)\leq
\left\|{\bm{R}}^{k}\right\|\left\|{\bm{\varepsilon}}^{k+\frac{1}{2}}\right\|
=\left\|{\bm{R}}^{k}\right\|\left\|\frac{{\bm{\varepsilon}}^{k}+{\bm{\varepsilon}}^{k+1}}{2}\right\|,
\end{array}\eqno(24)
$$
we have the following estimate in view of (23) and (24),
$$
\begin{array}{lll} \displaystyle
\frac{1}{2\tau}\left(\left\|{\bm{\varepsilon}}^{k+1}\right\|^2-\left\|{\bm{\varepsilon}}^{k}\right\|^2\right)\leq
\frac{1}{2}
\left\|{\bm{R}}^{k}\right\|\left\|{\bm{\varepsilon}}^{k}+{\bm{\varepsilon}}^{k+1}\right\|,
\end{array}
$$
i.e.,
$$
\begin{array}{lll} \displaystyle
\left\|{\bm{\varepsilon}}^{k+1}\right\|\leq\left\|{\bm{\varepsilon}}^{k}\right\|+\tau\left\|{\bm{R}}^{k}\right\|
\leq\left\|{\bm{\varepsilon}}^{0}\right\|+\tau\sum_{n=0}^{k}\left\|{\bm{R}}^{n}\right\|.
\end{array}
$$

Notice that
$$
\begin{array}{lll} \displaystyle
\left\|{\bm{R}}^{n}\right\|^2=h\sum_{j=1}^{M-1}\left(R_j^k\right)^2\leq(M-1)h c_3^2(\tau^2+h^2)^2
\leq(b-a)c_3^2(\tau^2+h^2)^2.
\end{array}
$$
Then
$$
\begin{array}{lll} \displaystyle
\tau\sum_{n=0}^{k}\left\|{\bm{R}}^{n}\right\|\leq k\tau\sqrt{b-a}c_3(\tau^2+h^2)\leq c_3T\sqrt{b-a}(\tau^2+h^2),
\end{array}
$$
which gives
$$
\begin{array}{lll} \displaystyle
\left\|{\bm{\varepsilon}}^{k}\right\|\leq c_4(\tau^2+h^2),\;\;1\leq k\leq N,
\end{array}
$$
where $c_4=c_3T\sqrt{b-a}$. This ends the proof.
\end{proof}

\section{Numerical examples}
 In this section, we present one numerical example for checking the
convergence order of the numerical formula (16).
Next, another numerical example is given to test the convergence order and numerical stability for finite difference scheme (19).

\begin{example}
 Consider function $u(x)=x^2(1-x)^2$,\;$x\in[0,1]$.
The Riesz derivative of $u(x)$ at $x=0.5$ is $$\frac{\partial^\alpha
u(x)}{\partial{|x|^\alpha}}|_{x=0.5}=-\frac{(\alpha^2-6\alpha+8)2^{\alpha-1}}{\Gamma(5-\alpha)}\sec\left(\frac{\pi}{2}\alpha\right).$$
\end{example}

Table 1 lists the absolute errors and numerical convergence orders at $x=0.5$ with different $\alpha$
 and step size $h$. From the results presented in Table 1, one can see that the convergence orders
 are in line with the theoretical analysis.

\begin{table}[!htbp]\renewcommand\arraystretch{1.2}
 \begin{center}
 \caption{ The absolute errors and convergence orders of Example 4.1 by numerical scheme (16).}\vspace{0.05
cm}
 \vspace{0.6 cm}
 \begin{footnotesize}
\begin{tabular}{c c c c c c }\hline
  $\alpha$ &\; $h$\;&  \textrm{the absolute errors}&\;\;\;\;$\textrm {the convergence orders}$
  \\
  \hline \vspace{0.1 cm}
  $1.1 $& $\frac{1}{20}$ & 2.492284e-003 \;\; &  ---
\\ \vspace{0.1 cm}
  $$  & $\frac{1}{40}$&   6.462793e-004 \;\;&       1.9472 \\
  $$& $\frac{1}{80}$ &  1.643881e-004 \;\; &       1.9751 \\
  $$& $\frac{1}{160}$ &   4.144518e-005 \;\; &     1.9878\\
   $$& $\frac{1}{320}$ &  1.040456e-005 \;\; &    1.9940\\
 \hline\vspace{0.1 cm}
   $1.3 $& $\frac{1}{20}$ & 3.563949e-003 \;\;& ---
\\ \vspace{0.1 cm}
  $$  & $\frac{1}{40}$& 9.146722e-004	 \;\;  &   	1.9621\\
  \vspace{0.1 cm}
   $$  & $\frac{1}{80}$&2.315235e-004   \;\;  &   	1.9821\\
  \vspace{0.1 cm}
  $$& $\frac{1}{160}$ &5.823146e-005 \;\; &      	1.9913\\
   $$& $\frac{1}{320}$ &  1.460130e-005 \;\; &    	1.9957\\
 \hline\vspace{0.1 cm}
  $1.5 $& $\frac{1}{20}$ & 4.555022e-003\;\; &     ---
\\ \vspace{0.1 cm}
  $$  & $\frac{1}{40}$&  1.157683e-003    \;\; &     	1.9762\\
  $$  & $\frac{1}{80}$&2.916709e-004     \;\; &        1.9888\\
  $$& $\frac{1}{160}$ &  7.319217e-005\;\;  &  1.9946  \\
   $$& $\frac{1}{320}$ &  1.833193e-005	 \;\; &    1.9973\\
 \hline\vspace{0.1 cm}
   $1.7 $& $\frac{1}{20}$ &5.266851e-003 \;\; &      ---
\\ \vspace{0.1 cm}
  $$  & $\frac{1}{40}$& 1.326934e-003 \;\; &      1.9888 \\
   $$  & $\frac{1}{80}$& 3.329312e-004 \;\; &     1.9948 \\
  $$& $\frac{1}{160}$ & 8.337785e-005   \;\; &   1.9975\\
   $$& $\frac{1}{320}$ &  	2.086231e-005 \;\; &    1.9988\\
 \hline
$1.9 $& $\frac{1}{20}$ &	5.352412e-003 \;\; &      ---
\\ \vspace{0.1 cm}
  $$  & $\frac{1}{40}$& 1.339793e-003   \;\; &     1.9982 \\
   $$  & $\frac{1}{80}$&   3.351414e-004    \;\; &  1.9992\\
  $$& $\frac{1}{160}$ & 8.380846e-005	    \;\; &   1.9996\\
   $$& $\frac{1}{320}$ &  2.095494e-005 \;\; &     1.9998\\
 \hline
\end{tabular}
 \end{footnotesize}
 \end{center}
 \end{table}

\begin{example}
 We consider the following Riesz spatial fractional advection-diffusion equation in the following form,
 $$
\begin{array}{lll} \displaystyle
 \frac{\partial{{}u(x,t)}}{\partial{t}}+2
 \frac{\partial u(x,t)}{\partial{x}}
 =\alpha^2\frac{\partial^\alpha
u(x)}{\partial{|x|^\alpha}}
  +f(x,t),\;\;\;0<x<1,\;\;\;0< t\leq 1$$,
\end{array}
$$
with the source term
$$
\begin{array}{lll}
\displaystyle f(x,t)&=&\displaystyle\alpha^2
\left\{\frac{12}{\Gamma(5-\alpha)}\left[x^{4-\alpha}+(1-x)^{4-\alpha}\right]-
\frac{240}{\Gamma(6-\alpha)}\left[x^{5-\alpha}+(1-x)^{5-\alpha}\right]
\right. \vspace{0.2 cm}\\&& \displaystyle\left.
+\frac{2160}{\Gamma(7-\alpha)}\left[x^{6-\alpha}+(1-x)^{6-\alpha}\right]
-\frac{10080}{\Gamma(8-\alpha)}\left[x^{7-\alpha}+(1-x)^{7-\alpha}\right]
\right. \vspace{0.2 cm}\\&& \displaystyle\left.
+\frac{20160}{\Gamma(9-\alpha)}\left[x^{8-\alpha}+(1-x)^{8-\alpha}\right]
 \right\}\frac{\cos(\alpha t^2)}{\cos\left(\frac{\pi}{2}\alpha\right)}
 -2\alpha t x^4(1-x)^4\sin(\alpha t^2) \vspace{0.2 cm}\\&& \displaystyle+2\cos(\alpha t^2)(8x^7-28x^6+36x^5-20x^4+4x^3).
\end{array}
$$
Its exact solution $u(x,t)=\cos(\alpha t^2) x^4(1-x)^4$ satisfies the corresponding
initial and boundary values conditions.
\end{example}

In
order to check the convergence order in temporal direction, we apply finite difference scheme (19)
 on a fixed sufficiently small spatial stepsize $h$
 and variable temporal stepsizes $\tau$.
Similarly, in order to check the convergence order in spatial direction, we use
a fixed sufficiently small temporal stepsize $\tau$
 and variable spatial stepsize $h$.
 The absolute errors and numerical convergence orders are presented in Tables 2 and 3, respectively. From these numerical results,
it is seen that numerical scheme (19) has 2nd-order convergence order for both temporal and spatial directions,
 which is in agreement with the derived theoretical results.
Furthermore, in Figs. \ref{fig.1} and \ref{fig.2} we present the errors for different $\tau$, $h$ and $\alpha$,
 which also show that finite difference scheme
(19) is effective.
\begin{table}[!htbp]\renewcommand\arraystretch{1.2}
 \begin{center}
 \caption{ The absolute errors and temporal convergence orders of Example
  4.2 by numerical scheme (19) with $h=\frac{1}{1000}$.}\vspace{0.05cm}
 \vspace{0.6 cm}
 \begin{footnotesize}
\begin{tabular}{c c c c c c }\hline
  $\alpha$ &\; $\tau$\;&  \textrm{the absolute errors}&\;\;\;\;$\textrm {the spatial convergence orders}$
  \\
  \hline \vspace{0.1 cm}
  $1.2 $& $\frac{1}{5}$ &8.853323e-005 \;\; &  ---
\\ \vspace{0.1 cm}
  $$  & $\frac{1}{10}$&   2.139870e-005  \;\;&      2.05 \\\vspace{0.1 cm}
  $$& $\frac{1}{20}$ & 5.374545e-006 \;\; &        1.99 \\\vspace{0.1 cm}
  $$& $\frac{1}{40}$ &  1.350102e-006 \;\; &     1.99\\\vspace{0.1 cm}
   $$& $\frac{1}{80}$ &   3.461897e-007 \;\; &   1.96\\
 \hline\vspace{0.1 cm}
   $1.4 $& $\frac{1}{5}$ & 9.968682e-005  \;\;& ---
\\ \vspace{0.1 cm}
  $$  & $\frac{1}{10}$&  2.551591e-005 	 \;\;  &   	 1.97\\
  \vspace{0.1 cm}
   $$  & $\frac{1}{20}$& 6.323861e-006    \;\;  &   2.01\\
  \vspace{0.1 cm}
  $$& $\frac{1}{40}$ &1.591389e-006\;\; &      	  1.99\\\vspace{0.1 cm}
   $$& $\frac{1}{80}$ & 4.064288e-007 	 \;\; &    1.97\\
 \hline\vspace{0.1 cm}
  $1.6 $& $\frac{1}{5}$ &  1.238098e-004\;\; &     ---
\\ \vspace{0.1 cm}
  $$  & $\frac{1}{10}$&   2.974978e-005    \;\; &     2.06\\\vspace{0.1 cm}
  $$  & $\frac{1}{20}$&7.370363e-006    \;\; &       2.01\\\vspace{0.1 cm}
  $$& $\frac{1}{40}$ &   1.846014e-006\;\;  &   2.00 \\\vspace{0.1 cm}
   $$& $\frac{1}{80}$ & 4.695380e-007	 \;\; &    1.98\\
 \hline\vspace{0.1 cm}
   $1.8 $& $\frac{1}{5}$ & 1.435874e-004  \;\; &      ---
\\ \vspace{0.1 cm}
  $$  & $\frac{1}{10}$&   3.361038e-005  \;\; &       2.09\\\vspace{0.1 cm}
   $$  & $\frac{1}{20}$& 8.398948e-006 \;\; &     2.00 \\\vspace{0.1 cm}
  $$& $\frac{1}{40}$ & 2.099550e-006      \;\; &   2.00\\\vspace{0.1 cm}
   $$& $\frac{1}{80}$ &  5.309125e-007  \;\; &      1.98\\
 \hline
\end{tabular}
 \end{footnotesize}
 \end{center}
 \end{table}

\begin{table}[!htbp]\renewcommand\arraystretch{1.2}
 \begin{center}
 \caption{ The absolute errors and spatial convergence orders of Example 4.2
  by numerical scheme (19) with $\tau=\frac{1}{2000}$.}\vspace{0.05
cm}
  \vspace{0.6 cm}
 \begin{footnotesize}
\begin{tabular}{c c c c c c }\hline
  $\alpha$ &\; $h$\;&  \textrm{the absolute errors}&\;\;\;\;$\textrm {the spatial convergence orders}$
  \\
  \hline \vspace{0.1 cm}
  $1.2 $& $\frac{1}{10}$ & 2.101375e-004 \;\; &  ---
\\ \vspace{0.1 cm}
  $$  & $\frac{1}{20}$&   5.260133e-005   \;\;&     2.00 \\\vspace{0.1 cm}
  $$& $\frac{1}{40}$ &  1.334869e-005 \;\; &        1.98 \\\vspace{0.1 cm}
  $$& $\frac{1}{80}$ &      3.373047e-006 \;\; &    1.98\\\vspace{0.1 cm}
   $$& $\frac{1}{160}$ &  8.484737e-007\;\; &     1.99\\
 \hline\vspace{0.1 cm}
   $1.4 $& $\frac{1}{10}$ &  2.015275e-004  \;\;& ---
\\ \vspace{0.1 cm}
  $$  & $\frac{1}{20}$&   5.155201e-005  	 \;\;  &   	  1.97\\
  \vspace{0.1 cm}
   $$  & $\frac{1}{40}$&  1.312357e-005    \;\;  &     1.97\\
  \vspace{0.1 cm}
  $$& $\frac{1}{80}$ & 3.315909e-006   \;\; &      	    1.97\\\vspace{0.1 cm}
   $$& $\frac{1}{160}$ & 8.337871e-007 	 \;\; &    1.99\\
 \hline\vspace{0.1 cm}
  $1.6 $& $\frac{1}{10}$ &  1.831026e-004 \;\; &     ---
\\ \vspace{0.1 cm}
  $$  & $\frac{1}{20}$&   4.672567e-005     \;\; &     1.97\\\vspace{0.1 cm}
  $$  & $\frac{1}{40}$&   1.183333e-005     \;\; &       1.98\\\vspace{0.1 cm}
  $$& $\frac{1}{80}$ &   2.979178e-006\;\;  &   1.99\\\vspace{0.1 cm}
   $$& $\frac{1}{160}$ & 7.475850e-007	 \;\; &   1.99\\
 \hline\vspace{0.1 cm}
   $1.8 $& $\frac{1}{10}$ & 1.485772e-004    \;\; &      ---
\\ \vspace{0.1 cm}
  $$  & $\frac{1}{20}$&   3.783943e-005    \;\; &       1.97\\\vspace{0.1 cm}
   $$  & $\frac{1}{40}$&   9.534022e-006 \;\; &     1.99 \\\vspace{0.1 cm}
  $$& $\frac{1}{80}$ &  2.391298e-006      \;\; &   2.00\\\vspace{0.1 cm}
   $$& $\frac{1}{160}$ &  5.991744e-007   \;\; &      2.00\\
 \hline
\end{tabular}
 \end{footnotesize}
 \end{center}
 \end{table}

\begin{figure}[!htbp]
\centering
 \includegraphics[width=11.3 cm]{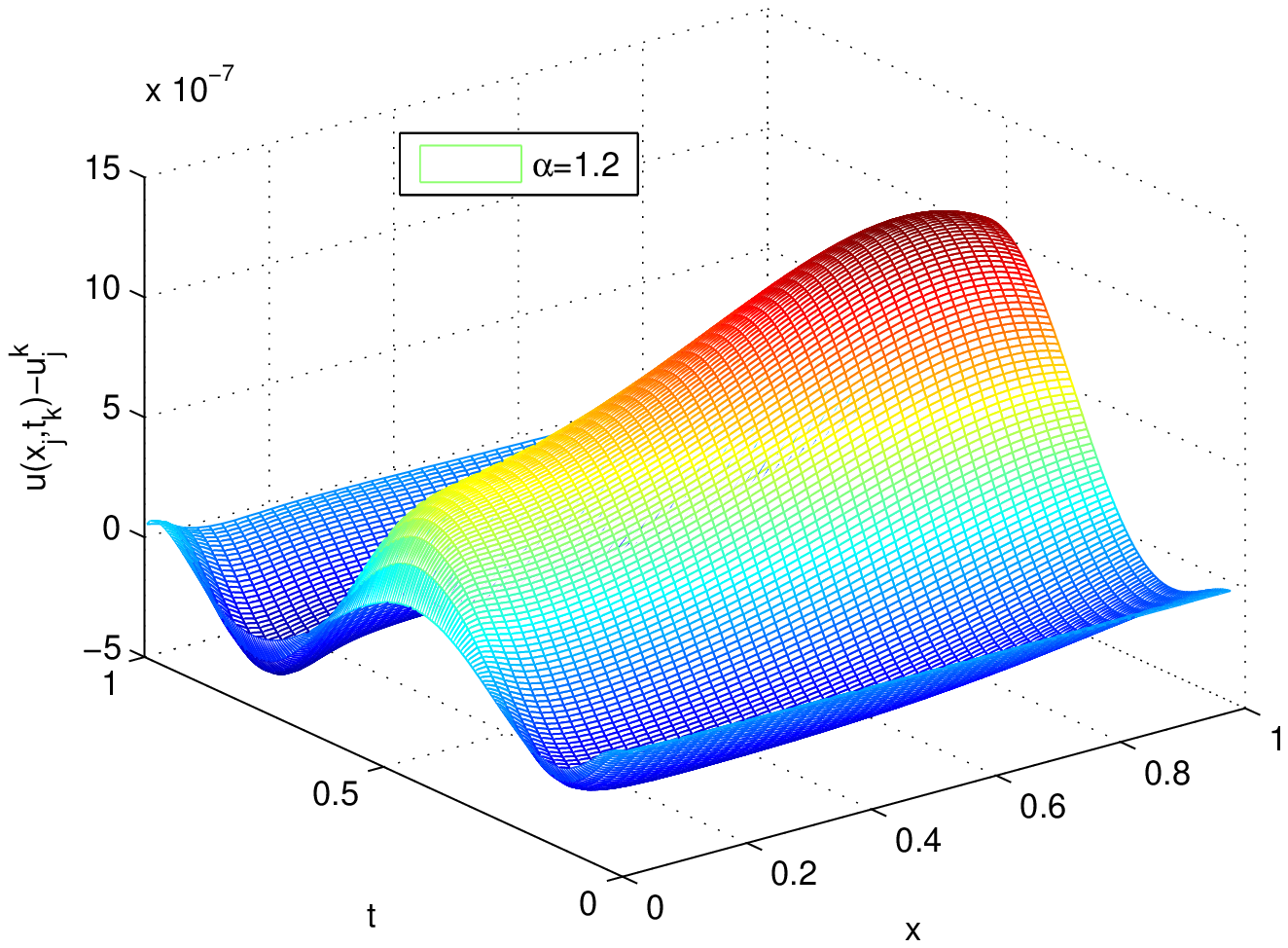}\\
  \caption{The error surface between the exact solution and numerical solution with $\tau=\frac{1}{50}$ and $h=\frac{1}{200}$.}
  \label{fig.1}
\end{figure}

\begin{figure}[!htbp]
\centering
 \includegraphics[width=11.3 cm]{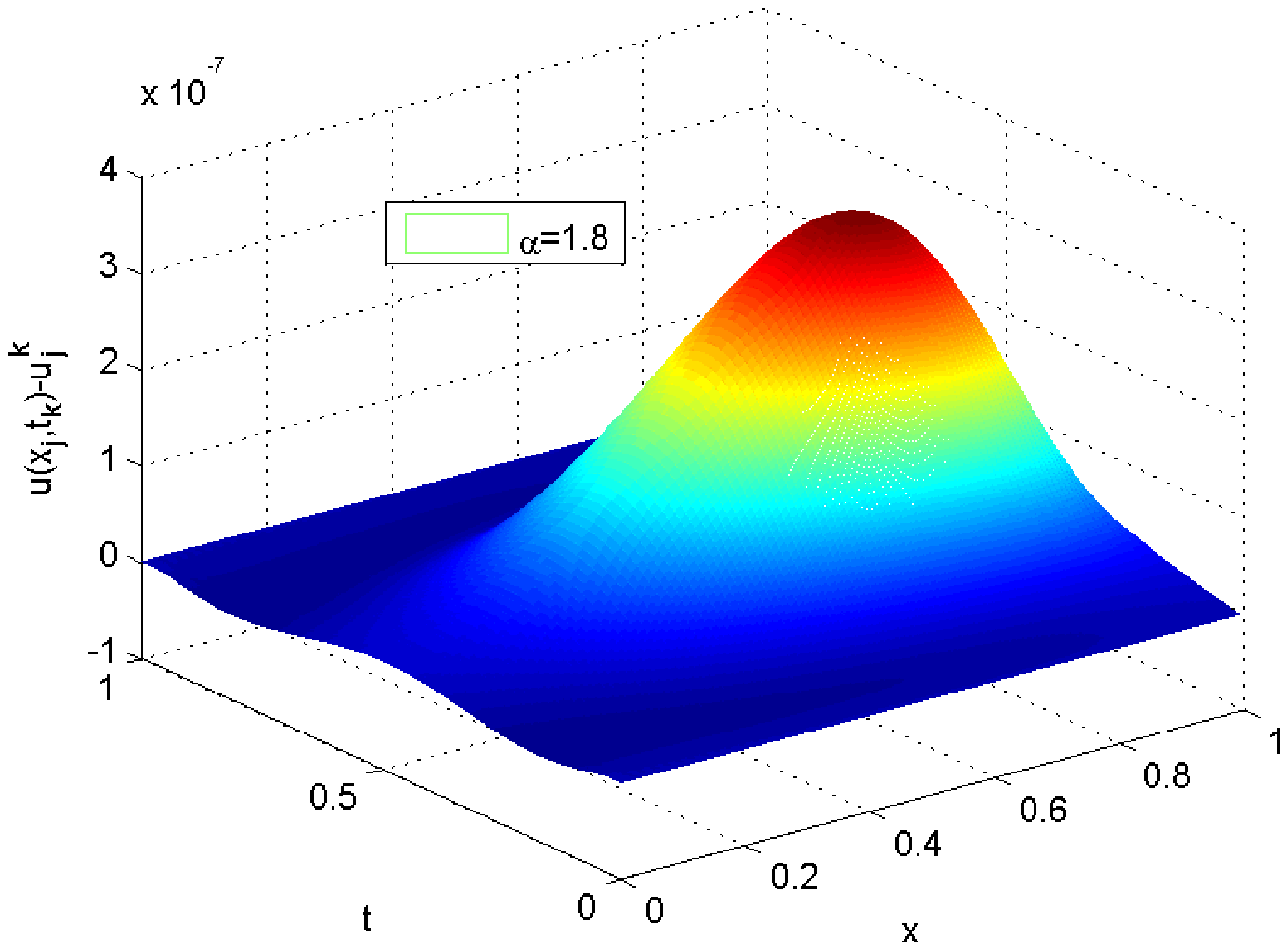}\\
  \caption{The error surface between the exact solution and numerical solution with $\tau=\frac{1}{100}$ and $h=\frac{1}{1000}$.}
  \label{fig.2}
\end{figure}

\section{Conclusion}
In this paper, a class of new numerical schemes are proposed for Riemann-Liouville
 derivatives (and Riesz derivatives).
Application of the 2nd-order scheme in spatial direction and the Crank-Nicolson
 technique in temporal direction, a finite difference scheme
is developed to solve the Riesz space fractional advection diffusion equation. We
prove that the difference scheme is unconditionally stable and convergent by using the energy method. Finally,
 two numerical examples have been given
to show the effectiveness of the numerical schemes. Following this idea,
the method and technique can be extended in a straightforward way to construct much
higher-order numerical algorithms for the
 tempered and substantial fractional derivatives, and the corresponding fractional differential equations.

\section*{Appendix A}

Now we present cases $p=3$ and $p=4$ in details.

$\mathrm{(i)}$  $p=3$

The generating function with
coefficients
$\displaystyle\kappa_{3,\ell}^{(\alpha)}\;\left(
\ell=0,1,\ldots,\right)$ reads as,
$$
\begin{array}{lll}
\displaystyle \widetilde{W}_{3}(z)=\left(a_{31}+a_{32}z+a_{33}z^2+a_{34}z^3\right)^{\alpha}=\sum\limits_{\ell=0}^{\infty}\kappa_{3,\ell}^{(\alpha)}z^\ell,
\end{array}
$$
where
$$
\begin{array}{lll}
\displaystyle a_{31}=\frac{11\alpha^2-12\alpha+3}{6\alpha^2},\;\;a_{32}=\frac{-6\alpha^2+10\alpha-3}{2\alpha^2},\;\;\vspace{0.2 cm}\\\displaystyle
a_{33}=\frac{3\alpha^2-8\alpha+3}{2\alpha^2},\;\;a_{34}=\frac{-2\alpha^2+6\alpha-3}{6\alpha^2}.
\end{array}
$$

This
generating function can be also rewritten as
$$\begin{array}{lll}
\displaystyle
\widetilde{W}_{3}(z)&=&\displaystyle\left(a_{31}+a_{32}z+a_{33}z^2+a_{34}z^3\right)^{\alpha}= a_{31}^{\alpha}\left(1-z\right)^{\alpha}\left(1+b_{31}z+b_{32}z^2\right)^{\alpha}
\vspace{0.4 cm}\\
 \displaystyle
&=&\displaystyle a_{31}^{\alpha}\left(1-z\right)^{\alpha}\sum\limits_{\ell_1=0}^{\infty}
\left(\alpha\atop \ell_1 \right)\left(b_{31}z\right)^{\ell_1}
\left(1+\frac{b_{32}}{b_{31}}z\right)^{\ell_1}
\vspace{0.4 cm}\\
 \displaystyle
&=&\displaystyle a_{31}^{\alpha}\left(1-z\right)^{\alpha}\sum\limits_{\ell_1=0}^{\infty}
\left(\alpha\atop \ell_1 \right)\left(b_{31}z\right)^{\ell_1}
\sum\limits_{\ell_2=0}^{\ell_1} \left(\ell_1\atop \ell_2
\right)\left(\frac{b_{32}}{b_{31}}z\right)^{\ell_2}
\vspace{0.4 cm}\\
 \displaystyle
&=&\displaystyle a_{31}^{\alpha}
\sum\limits_{\ell=0}^{\infty} \left[ \sum\limits_{\ell_1=0}^{\ell}
\sum\limits_{\ell_2=0}^{\left[\frac{1}{2}\ell_1\right]}
\frac{\left(-1\right)^{\ell_1+\ell_2}(\ell_1-\ell_2)!b_{31}^{\ell_1-2\ell_2}
b_{32}^{\ell_2}}{\ell_2!(\ell_1-2\ell_2)!}
{\varpi}_{1,\ell-\ell_1}^{(\alpha)}
{\varpi}_{1,\ell_1-\ell_2}^{(\alpha)} \right]z^{\ell},
\end{array}
$$
in which $$\begin{array}{lll}
\displaystyle
b_{31}=\frac{-7\alpha^2+18\alpha-6}{11\alpha^2-12\alpha+3},\;\;b_{32}=\frac{2\alpha^2-6\alpha+3}{11\alpha^2-12\alpha+3}.
\end{array}
$$

So,
$$\begin{array}{lll}
\displaystyle
\kappa_{3,\ell}^{(\alpha)}=a_{31}^{\alpha}
\sum\limits_{\ell_1=0}^{\ell}
\sum\limits_{\ell_2=0}^{\left[\frac{1}{2}\ell_1\right]}
P(\alpha,\ell_1,\ell_2)
{\varpi}_{1,\ell-\ell_1}^{(\alpha)}
{\varpi}_{1,\ell_1-\ell_2}^{(\alpha)},\;\ell=0,1,\ldots.
\end{array}
$$
where
$$\begin{array}{lll}
\displaystyle
P(\alpha,\ell_1,\ell_2)=\frac{\left(-1\right)^{\ell_1+\ell_2}(\ell_1-\ell_2)!}{\ell_2!(\ell_1-2\ell_2)!}b_{31}^{\ell_1-2\ell_2}
b_{32}^{\ell_2}.
\end{array}
$$

In addition, we can get the following recursion relation by using the expressions of $\kappa_{3,\ell}^{(\alpha)}$
and automatic differentiation techniques,
$$\left\{
\begin{array}{lll}
\displaystyle
\kappa_{3,0}^{(\alpha)}&=&\displaystyle\left(\frac{11\alpha^2-12\alpha+3}{6\alpha^2}\right)^{\alpha},\;\vspace{0.2 cm}\\
\displaystyle\kappa_{3,1}^{(\alpha)}&=&\displaystyle-\frac{3\alpha(6\alpha^2-10\alpha+3)}{11\alpha^2-12\alpha+3}\kappa_{3,0}^{(\alpha)},\;\vspace{0.2 cm}\\
\displaystyle\kappa_{3,2}^{(\alpha)}&=&\displaystyle\frac{3\alpha(108\alpha^5-402\alpha^4+520\alpha^3-312\alpha^2+87\alpha-9)}
{2(11\alpha^2-12\alpha+3)^2}\kappa_{3,0}^{(\alpha)}
,\vspace{0.2 cm}\\
\displaystyle\kappa_{3,\ell}^{(\alpha)}&=&\displaystyle\frac{1}{a_{31}\ell}\left[a_{32}(\alpha-\ell+1)
\kappa_{3,\ell-1}^{(\alpha)}+a_{33}(2\alpha-\ell+2)\kappa_{3,\ell-2}^{(\alpha)}\right.
\\&&\displaystyle\left.+a_{34}(3\alpha-\ell+3)\kappa_{3,\ell-3}^{(\alpha)}
\right],\;\ell\geq3.
\end{array}\right.
$$

$\mathrm{(ii)}$  $p=4$

 The generating function with coefficients $\displaystyle\kappa_{4,\ell}^{(\alpha)}\;\left(
\ell=0,1,\ldots,\right)$ reads as follows,
$$
\begin{array}{lll}
\displaystyle \widetilde{W}_{4}(z)=\left(a_{41}+a_{42}z+a_{43}z^2+a_{44}z^3+a_{45}z^4\right)^{\alpha}=\sum\limits_{\ell=0}^{\infty}\kappa_{4,\ell}^{(\alpha)}z^\ell,
\end{array}
$$
in which
$$
\begin{array}{lll}
\displaystyle a_{41}=\frac{25\alpha^3-35\alpha^2+15\alpha-2}{12\alpha^3},\;\;
a_{42}=\frac{-24\alpha^3+52\alpha^2-27\alpha+4}{6\alpha^3},\;\;\vspace{0.2 cm}\\\displaystyle
a_{43}=\frac{6\alpha^3-19\alpha^2+12\alpha-2}{2\alpha^3},\;\;a_{44}=\frac{-8\alpha^3+28\alpha^2-21\alpha+4}{6\alpha^3},\vspace{0.2 cm}\\\displaystyle
a_{45}=\frac{3\alpha^3-11\alpha^2+9\alpha-2}{12\alpha^3}.
\end{array}
$$
Similarly,
 one can get
$$\begin{array}{lll}
\displaystyle \kappa_{4,\ell}^{(\alpha)}=a_{41}^{\alpha}
\sum\limits_{\ell_1=0}^{\ell}
\sum\limits_{\ell_2=0}^{\left[\frac{2}{3}\ell_1\right]}
\sum\limits_{\ell_3=\max\{0,2\ell_2-\ell_1\}}^{\left[\frac{1}{2}\ell_2\right]}
P(\alpha,\ell_1,\ell_2,\ell_3)
\varpi_{1,\ell-\ell_1}^{(\alpha)}\varpi_{1,\ell_1-\ell_2}^{(\alpha)},
\end{array}
$$
where
$$\begin{array}{lll}
\displaystyle
P(\alpha,\ell_1,\ell_2,\ell_3)=\frac{\left(-1\right)^{\ell_1+\ell_2}\left(\ell_1-\ell_2\right)!}
{\ell_3!\left(\ell_2-2\ell_3\right)!\left(\ell_1+\ell_3-2\ell_2\right)!}b_{41}^{\ell_1+\ell_3-2\ell_2}b_{42}^{\ell_2-2\ell_3}b_{43}^{\ell_3}
,
\end{array}
$$
and
$$\begin{array}{lll}
\displaystyle
 b_{41}=\frac{-23\alpha^3+69\alpha^2-39\alpha+6}{25\alpha^3-35\alpha^2+15\alpha-2},\;\;
b_{42}=\frac{13\alpha^3-45\alpha^2+33\alpha-6}{25\alpha^3-35\alpha^2+15\alpha-2},\vspace{0.2 cm}\\ \displaystyle
b_{43}=\frac{-3\alpha^3+11\alpha^2-9\alpha+2}{25\alpha^3-35\alpha^2+15\alpha-2}.
\end{array}
$$

The recursion formula is given as,
$$\left\{
\begin{array}{lll}
\displaystyle
\kappa_{4,0}^{(\alpha)}=\left(\frac{25\alpha^3-35\alpha^2+15\alpha-2}{12\alpha^3}\right)^{\alpha},\;\vspace{0.2 cm}\\
\displaystyle\kappa_{4,1}^{(\alpha)}=-\frac{2\alpha(24\alpha^3-52\alpha^2+27\alpha-4)}{25\alpha^3-35\alpha^2+15\alpha-2}\kappa_{4,0}^{(\alpha)},\;\vspace{0.2 cm}\\
\displaystyle\kappa_{4,2}^{(\alpha)}=\frac{2\alpha(576\alpha^7-2622\alpha^6+4441\alpha^5-3835\alpha^4+1844\alpha^3-497\alpha^2+70\alpha-4)}{(25\alpha^3-35\alpha^2+15\alpha-2)^2}\kappa_{4,0}^{(\alpha)}
,\vspace{0.2 cm}\\
\displaystyle\kappa_{4,3}^{(\alpha)}=-\frac{2\alpha}{3(25\alpha^3-35\alpha^2+15\alpha-2)^3}
\left(27648\alpha^{11}-19785\alpha^{10}+591000\alpha^9-995240\alpha^8\right.\vspace{0.2 cm}\\ \displaystyle\left.\hspace{1.2 cm}+1067901\alpha^7-775354\alpha^6
+390051\alpha^5-135738\alpha^4+31923\alpha^3-4820\alpha^2\right.\vspace{0.2 cm}\\ \displaystyle \left.\hspace{1.2 cm}+420\alpha-16\right)\kappa_{4,0}^{(\alpha)}
,\vspace{0.2 cm}\\
\displaystyle\kappa_{4,\ell}^{(\alpha)}=\frac{1}{a_{41}\ell}\left[a_{42}(\alpha-\ell+1)
\kappa_{4,\ell-1}^{(\alpha)}+a_{43}(2\alpha-\ell+2)\kappa_{4,\ell-2}^{(\alpha)}+a_{44}(3\alpha-\ell+3)\kappa_{4,\ell-3}^{(\alpha)}\right.
\vspace{0.2 cm}\\\left.
\hspace{1.2cm}+a_{45}(4\alpha-\ell+4)\kappa_{4,\ell-4}^{(\alpha)}
\right],\;\ell\geq4.
\end{array}\right.
$$

The cases for $p\geq5$ can be similarly derived howbeit very complicated. We omit them here.

\end{document}